\documentclass[12pt]{amsart}

\usepackage{fullpage,url,amssymb,enumerate,colonequals}
\usepackage{mathrsfs}
\usepackage[section]{placeins}
\usepackage{MnSymbol}
\usepackage{extarrows}
\usepackage{lscape}
\usepackage[all,cmtip]{xy}
\usepackage{tablefootnote}
\usepackage{footnote}

\usepackage{makecell}

\usepackage[OT2,T1]{fontenc}
\usepackage{color}

\usepackage[
        colorlinks, citecolor=darkgreen,
        backref,
        pdfauthor={Nuno Freitas},
]{hyperref}
\usepackage[capitalize,noabbrev]{cleveref}

\usepackage{comment}
\usepackage{multirow}
\usepackage{enumitem}
\usepackage{aliascnt}

\numberwithin{equation}{section}
\newtheorem{theorem}{Theorem}[section]

\newaliascnt{proposition}{theorem}
\newtheorem{proposition}[proposition]{Proposition}
\aliascntresetthe{proposition}

\newaliascnt{lemma}{theorem}
\newtheorem{lemma}[lemma]{Lemma}
\aliascntresetthe{lemma}

\newaliascnt{corollary}{theorem}
\newtheorem{corollary}[corollary]{Corollary}
\aliascntresetthe{corollary}

\theoremstyle{definition}
\newaliascnt{definition}{theorem}

\aliascntresetthe{definition}

\theoremstyle{remark}
\newaliascnt{remark}{theorem}
\newtheorem{remark}[remark]{Remark}
\aliascntresetthe{remark}

\crefname{theorem}{theorem}{theorems}
\crefname{proposition}{proposition}{propositions}
\crefname{lemma}{lemma}{lemmas}
\crefname{corollary}{corollary}{corollaries}
\crefname{definition}{definition}{definitions}
\crefname{remark}{remark}{remarks}

\definecolor{darkgreen}{rgb}{0,0.5,0}
\definecolor{rem}{rgb}{0.8,0,0}
\definecolor{new}{rgb}{0.3,0.1,0.9}
\definecolor{reply}{rgb}{0,0,0.8}
\definecolor{gray}{gray}{0.7}

\renewcommand{\gcd}{\text{gcd}}
\newcommand{\F}{\mathbb{F}}
\newcommand{\PP}{\mathbb{P}}
\newcommand{\Q}{\mathbb{Q}}

\newcommand{\Z}{\mathbb{Z}}
\newcommand{\rhobar}{{\overline{\rho}}}
\newcommand{\eps}{\varepsilon}

\newcommand{\Qbar}{{\overline{\Q}}}

\DeclareMathOperator{\Frob}{Frob}

\DeclareMathOperator{\tr}{tr}

\DeclareMathOperator{\Norm}{ Norm}

\newcommand{\cL}{\mathcal{L}}

\newcommand{\cF}{\mathcal{F}}

\newcommand{\cT}{\mathcal{T}}

\newcommand{\fq}{\mathfrak q}
\newcommand{\fQ}{\mathfrak Q}

\newcommand{\bk}{\mathbf k}

\hypersetup{
  pdftitle={On Fermat-type equations of signature (r,r,p)},
  pdfauthor={Nuno Freitas},
  pdfsubject={Hilbert modular method, exact Frobenius traces, and Fermat-type equations}
}

\keywords{Generalized Fermat equations; modular method; Frey elliptic curves; Frobenius traces; cyclotomic fields.}
\subjclass[2020]{Primary 11D41, 11G05.}

\begin{document}

\title{On Fermat-type equations of signature $(r,r,p)$}
\author{Nuno Freitas}
\address{Instituto de Ciencias Matemáticas (ICMAT),
          Nicolás Cabrera 13-15
         28049 Madrid, Spain}
\email{nunob.freitas@icmat.es}

\begin{abstract}
Let $r\geq11$ be a prime. We show that there are infinitely many integers $C$ for which the Fermat-type equation
$$
x^r+y^r=Cz^p
$$
has no non-trivial primitive solutions for all sufficiently large (in terms of $r$ and~$C$) prime exponents~$p$. The proof uses several Frey curves to force simultaneous Frobenius trace equalities at the primes above~$3$; a new trace separation argument shows that these equalities imply $3\mid x+y$, after which a further Frey curve and level lowering give a contradiction with the Ramanujan bound.
\end{abstract}

\maketitle

\section{Introduction}

We consider the Fermat-type equation
\begin{equation}\label{eq:rrp}
        x^r+y^r=Cz^p,
\end{equation}
where $r$ and $p$ are primes and $C$ is a positive integer. A solution
$(a,b,c)\in\Z^3$ is \emph{primitive} if $\gcd(a,b)=1$ and \emph{non-trivial}
if $abc\neq0$. It follows from a result of Darmon--Granville
\cite{DarmonGranville} that, for fixed $r$ and $p$, there are only finitely
many primitive solutions to~\eqref{eq:rrp}. It is conjectured, however, that
the finiteness of such solutions also holds when~$p$ varies.

The modular approach, originating in the proof of Fermat's Last Theorem~\cite{Wiles}, has
proved particularly effective in the study of generalized Fermat equations.
A systematic approach to~\eqref{eq:rrp} using the modular method was
developed by the author in~\cite{FreitasRecipes}, using Frey elliptic curves
over the maximal totally real subfield
$K_r=\Q(\zeta_r)^+ \subset \Q(\zeta_r)$.

Several subsequent works obtain asymptotic results for particular values of
$r$ and $C$, or under local conditions on the putative solution. Recent
examples include the work of Mocanu~\cite{Mocanu}, obtaining asymptotic
results for many primes $r<150$ and $C=1$ under the assumption that $c$ is
even; the work of Freitas--Najman~\cite{FreitasNajman}, treating arbitrary
$r$ and infinitely many coefficients, but only for a positive density set of
prime exponents~$p$ and for solutions satisfying $2\mid a+b$ or
$r\mid a+b$; 
and the more recent work of
Kara--Mocanu--\"Ozman~\cite{KaraMocanuOzman}, where, assuming the Weak
Frey--Mazur conjecture and also the Eichler--Shimura conjecture over $K_r$
when $[K_r:\Q]$ is even, they prove that~\eqref{eq:rrp} admits no
non-trivial primitive solution with $|c|>1$ for every prime $r\geq5$
and every $C\geq1$ with $r\nmid C$, provided that $p$ is sufficiently
large in terms of $r$ and~$C$. Our main theorem gives an unconditional
result of this kind for infinitely many coefficients~$C$ for every
prime $r\geq11$, and also excludes solutions with $c=\pm1$.

\begin{theorem}\label{thm:main}
Let $r\geq11$ be prime. There exists a finite set $\cL_r$ of primes
$\ell$ satisfying
\[
\ell\neq2,3,r
\qquad\text{and}\qquad
\ell\not\equiv1\pmod r
\]
such that the following holds. Let $C>2$ be an integer such that
$3\nmid C$, every prime divisor $q$ of $C$ satisfies $q\neq r$ and
$q\not\equiv1\pmod r$, and every $\ell\in\cL_r$ divides~$C$.
Then, for every sufficiently large
prime~$p$, the equation~\eqref{eq:rrp} has no non-trivial primitive
solutions; here the lower bound on~$p$ may depend on $r$ and~$C$.
\end{theorem}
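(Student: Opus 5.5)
We follow the strategy announced in the abstract. Suppose $(a,b,c)$ is a non-trivial primitive solution of \eqref{eq:rrp} with $p$ large. Factor $a^r+b^r=(a+b)\phi_r(a,b)$ with $\phi_r(a,b)=\prod_{j=1}^{r-1}(a+\zeta_r^j b)$. The hypotheses on $C$ (no prime $q=r$ or $q\equiv1\pmod r$) force every prime of $C$ to divide only $a+b$, since primes dividing $\phi_r(a,b)$ are $r$ or $\equiv1\pmod r$. Hence $a+b=C' w^p$ and $\phi_r(a,b)=r^{\epsilon} v^p$ up to bounded factors, with $C\mid C'$ up to $r$-adic adjustment.

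Over $K_r$ we attach Freitas' Frey curves $E_{(a,b)}$, from \cite{FreitasRecipes}, built from triples of the factors $a^2+(\zeta^k+\zeta^{-k})ab+b^2$. We use several such curves, one for each admissible choice of index triple. For $p$ large these curves have the standard properties:

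\begin{itemize}
\item they are modular (modularity over $K_r$ for semistable-at-$3$ or large-$p$ situations, via Freitas--Le Hung--Siksek type results);
\item their mod $p$ representations are irreducible, for $p$ large in terms of $K_r$;
\item they have conductor supported on $2$, $r$, the primes of $C$, and the primes above $p$-th powers.
\end{itemize}

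Level lowering then gives Hilbert newforms $\mathfrak f_i$ at levels independent of the solution, so only finitely many possibilities occur. The primes $\ell\in\cL_r$ dividing $C$ are used as in Freitas--Najman. Each $\ell$ divides $a+b$, so $E_{(a,b)}$ has multiplicative reduction at primes above $\ell$. The congruence then forces $a_{\lambda}(\mathfrak f)\equiv\pm(\Norm\lambda+1)\pmod p$, and for $p$ large this gives equality. Choosing $\cL_r$ as a finite set of auxiliary primes, each killing the finitely many newforms not of the right shape (CM forms, forms with small coefficient fields, and so on), we eliminate all but the newforms whose traces are compatible with genuine solutions. Choosing $\ell$ inert or of large residue degree in $K_r$ uses $\ell\not\equiv1\pmod r$.

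At the primes $\fq\mid 3$ of $K_r$ (note $3\nmid C$), suppose first that $3\nmid a+b$. Then each Frey curve has good reduction at $\fq$, and its trace depends only on $(a,b)\bmod 3$. For $p$ large the congruences become equalities $a_\fq(E_i)=a_\fq(\mathfrak f_i)$, holding simultaneously for all the Frey curves at all $\fq\mid3$. The \emph{trace separation} step is to show that no residue pair $(a,b)\bmod 3$ with $a+b\not\equiv0$ makes all these traces agree with those of a single fixed tuple of newforms coming from the surviving families. This is done by an explicit computation of the reductions of the $E_i$ over $\F_{3^f}$, comparing the trace vectors for $(a,b)\equiv(1,1),(1,0),(0,1),(-1,-1)$ and so on, and showing that they are pairwise distinct and also distinct from those admissible for the newforms. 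We expect this to be the main obstacle, since it must be carried out uniformly in $r\ge 11$. The plan is to use the structure of $\F_{3^f}$ with $f$ the order of $3$ mod $r$, together with the explicit form of the $j$-invariants, to separate the traces. The proof is not complete without this step.

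It follows that $3\mid a+b$. We then apply a further Frey curve, for instance the one over $\Q$ or over $K_r$ attached to the factor $a+b=C'w^p$, which has multiplicative reduction at $3$. Level lowering removes $3$ from the level, and comparison at a prime $\fq\mid3$ gives $\Norm\fq+1\equiv\pm a_\fq(\mathfrak g)\pmod p$. Since $|a_\fq(\mathfrak g)|\le 2\sqrt{\Norm\fq}<\Norm\fq+1$ by the Ramanujan bound, the difference is a nonzero integer (or a nonzero algebraic integer with bounded norm), and $p$ divides it. This is impossible for $p$ sufficiently large in terms of $r$ and $C$, which proves \cref{thm:main}.
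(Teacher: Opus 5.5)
\textbf{There are two genuine gaps, and they are linked.}

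\emph{First, your mechanism for $\cL_r$ does not work.} Consider the curves $E^\bk_{a,b}$ with $k_1\geq1$. A prime $\ell\neq2,r$ dividing $a+b$ gives \emph{good} reduction at every $\fq\mid\ell$, not multiplicative reduction. Indeed $f_k(a,b)\equiv(2-\omega_k)b^2\pmod\fq$, and $2-\omega_k,\alpha,\beta,\gamma$ are supported at $\fq_r$. For a curve containing the factor $(a+b)^2$ the reduction is multiplicative, but then $\ell\mid C$ keeps $\fq$ in the Serre level. So no congruence $a_\fq(f)\equiv\pm(\Norm\fq+1)\pmod p$ is available, and with $p$ large such a congruence would contradict Ramanujan anyway, not give an equality.

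The paper's idea is different. For $\ell\mid a+b$, the reductions of $E^\bk_{a,b}$ and $E^\bk_{1,-1}$ at $\fq\mid\ell$ are isomorphic, so $a_\fq(f)\equiv a_\fq(E^\bk_{1,-1})\pmod\lambda$. For each level-lowered newform $f$ whose $q$-adic representation is not that of $E^\bk_{1,-1}$, one picks $\fq_f$ with $a_{\fq_f}(f)\neq a_{\fq_f}(E^\bk_{1,-1})$ and $\eps(\Frob_{\fq_f})=-1$, where $\eps$ cuts out $\Q(\zeta_r)/K$.
\begin{itemize}
\item Such primes exist by \Cref{lem:coset}: $E^\bk_{1,-1}$ has good reduction at $\fq_r$, so its $q$-adic representation $V$ satisfies $V\not\simeq V\otimes\eps$.
\item The condition $\eps(\Frob_{\fq_f})=-1$ forces the rational prime below to be $\not\equiv1\pmod r$, so $\ell\mid C$ implies $\ell\mid a+b$.
\end{itemize}
Requiring all these primes to divide $C$ leaves only the form of $E^\bk_{1,-1}$. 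Hence $a_\fq(E^\bk_{a,b})=a_\fq(E^\bk_{1,-1})$ for all $\fq\mid3$ and all triples used.

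Without this step, your comparison target at $3$ (``a fixed tuple of newforms from the surviving families'') is uncontrolled. You would need the traces at $\fq\mid3$ of unknown Hilbert newforms of level supported at $2r$, uniformly in $r$. Your requirement that the trace vectors be pairwise distinct is also false and unnecessary: $[1:0]$ and $[0:1]$ give the same curve, since $f_k(1,0)=f_k(0,1)=1$.

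\emph{Second, the trace separation you leave open is the main new content, not a routine computation.} What must be shown is that for $t\in\{0,1\}$ (the point $\infty$ gives the same curve as $0$), some triple and some $\fq\mid3$ give $a_\fq(E^\bk_t)\neq a_\fq(E^\bk_{-1})$.
\begin{itemize}
\item The paper first proves $a_k(y^2=x^3+dx^2+cx)\equiv\Norm_{k/\F_3}(d)\pmod 3$. This turns congruence mod~$3$ into the condition $h_t/h_{-1}\in k^{\times2}$.
\item For $t=1$, write $P_\bk=h^\bk_1h^\bk_{-1}$. The identities $d^5P_{(1,3,5)}=u^2P_{(2,4,8)}$ and $d^8P_{(2,6,10)}=-v^4P_{(1,3,5)}P_{(1,5,7)}$ force $3^f\equiv1\pmod r$ with $f$ odd, and then $\chi(-1)=1$, which is impossible.
\item For $t=0$, the mod~$3$ criterion on the triples $\mathbf f_j$ and $(2,6,10)$ is used only to extract that $\mu_r\subset k$, that $f$ is odd, and that every $1+\zeta_r^j$ is a square.
\item The contradiction for $t=0$ comes from a $2$-adic argument. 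It uses the models $Y^2=-G_A(W)$ and $Y^2=WG_A(W)$, and the quartet $b-4,b-2,b,b+2$ with $b$ minimal such that $\chi(1-\zeta_r^b)\neq\chi(1-\zeta_r)$. A character-sum identity then shows that the four trace differences sum to $8\pmod{16}$, so they cannot all vanish.
\end{itemize}

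Your final step does match the paper: switching to $F_{a,b}$ once $3\mid a+b$, level lowering at $\fq\mid3$, and applying the Ramanujan bound.
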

To our knowledge, this is the first unconditional
asymptotic result for generalized Fermat equations of signature $(r,r,p)$ in
which both prime exponents vary over unbounded ranges, without imposing local
conditions on the putative solution.

Let us summarize the proof. It extensively uses the multi-Frey technique~\cite{BCDFMultiFrey} with
the Frey curves~$E^\bk_{a,b}$ constructed in~\cite{FreitasRecipes}, indexed by certain triples~$\bk$. The
mod~$p$ representation of $E^\bk_{a,b}$ is level lowered to
one of finitely many Hilbert newforms~$f$. Comparing the $q$-adic representations of~$f$ with the $q$-adic Tate module of the Frey curve
$E^\bk_{1,-1}$ associated with the trivial solution $(1,-1,0)$ allows us to
construct the finite set $\cL_r$. The condition that every
$\ell\in\cL_r$ divides~$C$ then forces
$a_\fq(E^\bk_{a,b})=a_\fq(E^\bk_{1,-1})
$
for the required triples~$\bk$ and all $\fq\mid3$ of~$K_r$.

The main new ingredient is a trace separation argument at the primes above~$3$.
There are only four possible values of $[a:b]$ in $\PP^1(\F_3)$. We show
that, for each value different 
from~$[1:-1]$, we can choose one of the Frey
curves $E^\bk_{a,b}$ for which its Frobenius trace differs from that of~$E^\bk_{1,-1}$ at some prime above~$3$. Hence the simultaneous trace
equalities obtained above force $[a:b]=[1:-1]$, and therefore $3\mid a+b$.
We then pass to a second Frey 
curve~$F_{a,b}$, which has multiplicative
reduction at the primes $\fq\mid3$, while these primes do not divide the
Serre level of its mod~$p$ representation. Thus level lowering mod~$p$
removes the primes above~$3$, and the resulting congruence with a Hilbert
newform contradicts the Ramanujan bound when~$p$ is sufficiently large.

Throughout this paper, $r\geq11$ is prime,
$K=K_r=\Q(\zeta_r)^+$ and $n=[K:\Q]=(r-1)/2$.

\subsection{Acknowledgements} 
This work was completed over several visits to the Max Planck Institute for Mathematics in Bonn and the author appreciates its hospitality and financial support.
\vspace{-3mm}

\subsection{AI use disclosure.}
AI models were used in the work leading to this paper. OpenAI’s GPT-5.6 Sol and GPT-6 Astra
were used for mathematical interactions, exploration and auditing. All statements and proofs were checked by the author, who takes full responsibility for the content.

\section{The Frey curves} \label{sec:Frey}

Let $r\geq11$ be a prime number, and fix a primitive $r$-th root of unity
$\zeta_r$. Put $K=\Q(\zeta_r)^+$, $n=[K:\Q]=(r-1)/2$, and let
$\fq_r$ be the unique prime of $K$ above~$r$. For $k\geq0$, we set
\[
 \omega_k=\zeta_r^k+\zeta_r^{-k},\qquad
 f_k(x,y)=x^2+\omega_kxy+y^2.
\]
The following factorization holds over $K$:
\begin{equation}\label{eq:cyclotomic-factorization}
        x^r+y^r=(x+y)\prod_{k=1}^{n}f_k(x,y).
\end{equation}

Let $\bk=(k_1,k_2,k_3)$ satisfy
\[
0\leq k_1<k_2<k_3\leq r-1,
\quad \text{ and } \quad
k_i\not\equiv\pm k_j\pmod r \;\text{ for } i\neq j
\]
and set
\[
 \alpha=\omega_{k_3}-\omega_{k_2},\qquad
 \beta=\omega_{k_1}-\omega_{k_3},\qquad
 \gamma=\omega_{k_2}-\omega_{k_1}.
\]
For integers $a,b$, not both zero, with $a+b\neq0$ when $k_1=0$, set
\[
 A_{a,b}=\alpha f_{k_1}(a,b),\qquad
 B_{a,b}=\beta f_{k_2}(a,b),\qquad
 C_{a,b}=\gamma f_{k_3}(a,b).
\]
We have $A_{a,b}+B_{a,b}+C_{a,b}=0$, and define
\begin{equation}\label{eq:Frey-general}
 Z_{a,b}^{\bk}:Y^2=X(X-A_{a,b})(X+B_{a,b}),
 \qquad
 \Delta(Z_{a,b}^{\bk})=2^4(A_{a,b}B_{a,b}C_{a,b})^2.
\end{equation}
We use the notation
\begin{equation}\label{eq:second-curve}
 F_{a,b}:=Z_{a,b}^{(0,1,2)}\quad \text{ and } \quad
 E^\bk_{a,b}:=Z^\bk_{a,b}\quad\text{when }k_1\geq1.
\end{equation}
After this section, the curve $F_{a,b}$ will only be used in
Section~\ref{sec:trace-reduction}. From now on, by a triple
$\bk=(k_1,k_2,k_3)$ we mean one satisfying
\begin{equation}\label{eq:triple-conditions}
1\leq k_1<k_2<k_3\leq r-1,
\qquad
k_i\not\equiv\pm k_j\pmod r\quad\text{for }i\neq j.
\end{equation}
\begin{remark}\label{rem:triple-symmetry}
Since $\omega_{-j}=\omega_j$ and $f_{-j}=f_j$, replacing any index by its
negative modulo~$r$ does not change the coefficients of the curve. Moreover,
permuting the three indices gives a $K$-isomorphic curve. Thus every
$E^\bk_{a,b}$ is $K$-isomorphic to one indexed by a unique triple satisfying
$1\leq k_1<k_2<k_3\leq n$.
\end{remark}

The following lemma shows that the triples needed in the proofs 
in \S\ref{sec:t=1}--\S\ref{sec:t=0} satisfy~\eqref{eq:triple-conditions}.

\begin{lemma}\label{lem:valid-triples}
Let $r\geq11$ be prime. Let $\bk$ be a triple of one of the following
forms:
\begin{enumerate}[label=\textup{(\roman*)}]
\item $\bk$ is one of
$(1,2,4),(1,3,5),(2,4,8),
(2,6,10),(1,5,7)$;
\item $\bk$ is obtained by reducing $j,2j,4j$ modulo~$r$ to
$\{1,\ldots,r-1\}$ and reordering increasingly, where
$j\not\equiv0\pmod r$;
\item $\bk$ is an increasingly ordered subset of
$\{b-4,b-2,b,b+2\}$ with cardinality three,
where~$5\leq b\leq n$.
\end{enumerate}
Then $\bk$ satisfies~\eqref{eq:triple-conditions}.
\end{lemma}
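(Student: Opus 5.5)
The plan is to verify, for each of the three families, the two conditions in~\eqref{eq:triple-conditions}. The entries lie in $\{1,\dots,r-1\}$ and are strictly increasing. Distinct indices satisfy $k_i\not\equiv\pm k_j\pmod r$. Since every entry of a triple is a residue in $\{1,\ldots,r-1\}$, the condition $k_i\not\equiv k_j$ just says the entries are distinct. So the substance is $k_i+k_j\not\equiv0\pmod r$ for $i\ne j$, together with nonvanishing modulo~$r$ of every entry. Throughout we use $r\geq11$, so $n=(r-1)/2\geq5$.

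\emph{Family (i).} All entries are at most $10<r$, so they already lie in $\{1,\dots,r-1\}$, are nonzero and are visibly distinct. Pairwise sums are at most $8+10=18$, and at most $10+8$ only in $(2,6,10)$ and $(2,4,8)$. So the sum of two entries is a positive integer $\le 18<2r$ once $r\ge11$. It could be $\equiv0$ only if it equals $r$ exactly. The sums occurring are $3,5,6$ (from $(1,2,4)$), $4,6,8$, $6,10,12$, $8,12,16$, and $6,8,12$. The odd ones are $3,5$; the rest are even. Since $r\ge11$ is odd, no sum equals $r$, and we are done.

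\emph{Family (ii).} Since $r\nmid j$ and $r$ is odd, the residues $j,2j,4j$ are nonzero modulo~$r$. After reducing into $\{1,\dots,r-1\}$ and sorting, it suffices to check that $m_1j\not\equiv\pm m_2j$ for distinct $m_1,m_2\in\{1,2,4\}$. As $j$ is invertible modulo~$r$, this reduces to $m_1\mp m_2\not\equiv0\pmod r$. The relevant values are $m_1-m_2\in\{\pm1,\pm2,\pm3\}$ and $m_1+m_2\in\{3,5,6\}$, all nonzero and of absolute value less than $11\le r$. Distinctness of the three residues follows from the minus case, so the sorted triple is strictly increasing.

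\emph{Family (iii).} With $5\le b\le n$, the entries of $\{b-4,b-2,b,b+2\}$ satisfy $1\le b-4$ and $b+2\le n+2<r-1$, because $n+2<2n=r-1$ when $n\ge5$. They are distinct, and any three of them, listed increasingly, are strictly increasing. Two distinct entries differ by $2,4$ or $6$, which is less than $r$ and nonzero, so $k_i\not\equiv k_j$. Their sum lies between $(b-4)+(b-2)=2b-6\ge4$ and $b+(b+2)=2b+2\le 2n+2=r+1$. Moreover, the sum is $\equiv 2b\pmod 2$, hence even. Since $r$ is odd, the only multiple of $r$ in $[4,r+1]$ is $r$ itself, which is odd. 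Hence $k_i+k_j\not\equiv0\pmod r$.

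The only step requiring any care is the range estimate in (iii): the bound $2b+2\le r+1$ combined with the parity argument. I expect no real obstacle; the whole argument is elementary bookkeeping.
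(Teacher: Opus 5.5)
Your proof is correct and follows the paper's argument essentially verbatim: in (i) and (ii) you reduce to checking that $r\geq 11$ cannot divide a short explicit list of small differences and sums, and in (iii) you use the same range estimate $x+y\leq 2b+2\leq 2n+2=r+1<2r$ combined with parity. One harmless slip: no triple in (i) contains both $8$ and $10$, so the phrase ``at most $8+10=18$'' is garbled (the largest pairwise sum is actually $16$), but since you list every sum explicitly, this does not affect the argument.
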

\begin{proof}
For \textup{(i)}, a congruence modulo sign between two indices would force
$r$ to divide one of $1,2,3,4,5,6,8,10,12,16$, which is impossible for a
prime $r\geq11$. Thus \eqref{eq:triple-conditions} holds.

For \textup{(ii)}, the three classes are nonzero. If two were congruent
modulo sign, then, after cancelling $j$, two of $1,2,4$ would be congruent
modulo sign. This would force $r$ to divide one of $1,2,3,5,6$, again
impossible. Thus \eqref{eq:triple-conditions} holds.

For \textup{(iii)}, we have
$1\leq b-4<b-2<b<b+2\leq n+2\leq r-1$,
so all four indices lie in the required range. Let $x\neq y$ be two
elements of $\{b-4,b-2,b,b+2\}$. Then
$|x-y|\in\{2,4,6\}$,
therefore $x\not\equiv y\pmod r$, since $r\geq11$. It remains to exclude
$x\equiv-y\pmod r$, in which case $r\mid x+y$. On the other hand,
$x+y$ is a positive even integer, and
\[
x+y\leq b+(b+2)=2b+2\leq2n+2=r+1<2r,
\]
so $r\mid x+y$ forces $x+y=r$. This is impossible,
because $x+y$ is even and $r$ is odd.
Therefore, $x\not\equiv\pm y\pmod r$ and
\eqref{eq:triple-conditions} holds.
\end{proof}

In the statements concerning solutions of~\eqref{eq:rrp} in
Sections~2--4, we assume that $C>2$ and that every prime divisor $q$
of $C$ satisfies $q\neq r$ and $q\not\equiv1\pmod r$.
Let $h_r$ denote the class number of $K$. We will always take $p>h_r$,
as required in~\cite[\S2]{FreitasRecipes}. Since $C>2$, the additional
convention $|abc|\neq1$ used there causes no conflict with our
definition of a non-trivial solution.

\begin{proposition}\label{prop:first-modularity}
There is a constant $M_r$, depending only on~$r$, such that the following
holds.  Let $(a,b,c)$ be a primitive
solution of \eqref{eq:rrp} with prime exponent $p>M_r$. Then,
\begin{enumerate}[label=\textup{(\roman*)},leftmargin=2.2em]
\item the representation $\rhobar_{E^\bk_{a,b},p}$ is absolutely irreducible and its
Serre level~$N(\rhobar_{E^\bk_{a,b},p})$ belongs to the finite set
\[
 \left\{
   \left(\prod_{\mathfrak P\mid2}\mathfrak P^{s_{\mathfrak P}}\right)
   \fq_r^{\,t}:
   s_{\mathfrak P}\in\{2,3,4\},\ t\in\{0,2\}
 \right\};
\]
\item there is a Hilbert newform $f$ over~$K$ of
parallel weight~$2$, trivial character, level~$N(\rhobar_{E^\bk_{a,b},p})$, and a prime $\lambda\mid p$ in its
field of coefficients $\Q_f$, such that
$\rhobar_{E^\bk_{a,b},p}\simeq\rhobar_{f,\lambda}$;
\item if $q\neq2,r$ is a prime with $q\not\equiv1\pmod r$, then
$E^\bk_{a,b}$ has good reduction at every prime $\fq\mid q$ in~$K$.
Moreover, $E^\bk_{a,b}$ has good reduction at~$\fq_r$ if $r\mid a+b$.
\end{enumerate}
\end{proposition}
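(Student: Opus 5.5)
The plan is to reduce essentially everything to the results of \cite{FreitasRecipes}, checking that the general triples $\bk$ used here fit the framework there. I will start with part (iii), since it is the local input for the conductor computation in (i).

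\textbf{Part (iii).} Let $q\neq2,r$ be a prime with $q\not\equiv1\pmod r$.
\begin{itemize}
\item The discriminant of $E^\bk_{a,b}$ is $2^4(\alpha\beta\gamma)^2\bigl(f_{k_1}f_{k_2}f_{k_3}\bigr)(a,b)^2$.
\item The differences $\omega_i-\omega_j$ are, up to units, products of elements dividing $(1-\zeta_r^{m})$. So they are supported only at $\fq_r$.
\item Since $\gcd(a,b)=1$, a prime $\fq\nmid 2r$ dividing $f_k(a,b)$ must divide $a^r+b^r$ but not $a+b$. Such a $\fq$ has residue characteristic $q\equiv1\pmod r$: the order of $-a/b$ modulo $q$ is exactly $r$.
\end{itemize}
Hence $E^\bk_{a,b}$ has good reduction at every $\fq\mid q$ for the $q$ in the statement. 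This is where the hypothesis on the prime divisors of $C$ enters.

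At $\fq_r$, I would use the standard fact that $f_k(a,b)\equiv(a+b)^2\pmod{\fq_r}$. If $r\mid a+b$, all of $A,B,C$ then acquire the same valuation at $\fq_r$ beyond that of $\alpha,\beta,\gamma$, which lets us rescale. I would then invoke the corresponding lemma of \cite{FreitasRecipes} (good reduction at $\fq_r$ after a quadratic twist/rescaling) to conclude.

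\textbf{Part (i).} Away from $2$ and $\fq_r$, the curve has good reduction or multiplicative reduction at primes dividing $f_k(a,b)$. In the multiplicative case the valuation of the discriminant is $2p\,v(c)$ times a constant, so the Tate curve argument removes these primes from the Serre level. At the primes $\mathfrak P\mid2$ and at $\fq_r$, I would quote the conductor computations of \cite{FreitasRecipes}: they give exponents $s_{\mathfrak P}\in\{2,3,4\}$ and $t\in\{0,2\}$, and these depend only on $\bk$ through the differences $\omega_{k_i}-\omega_{k_j}$.

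For absolute irreducibility, the key point is that $E^\bk_{a,b}$ has full $2$-torsion. For $p$ larger than a bound depending only on $K$, I would apply the irreducibility theorem for curves with full $2$-torsion over totally real fields, as used in \cite{FreitasRecipes}, which goes via Freitas--Siksek type results; this produces $M_r$. Since the number of admissible triples is finite for fixed $r$, the maximum of the bounds over $\bk$ is still a constant depending only on $r$.

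\textbf{Part (ii).} For modularity of $E^\bk_{a,b}$ over the totally real field $K$, I would use the full $2$-torsion together with known modularity of elliptic curves over totally real fields for such $p$, or the modularity results cited in \cite{FreitasRecipes}. Level lowering (Fujiwara/Jarvis/Rajaei) then yields $f$ of parallel weight $2$ and level $N(\rhobar_{E^\bk_{a,b},p})$. This requires $p>h_r$ and irreducibility, possibly with $p$ enlarged to exclude a finite set of primes; that enlargement is absorbed into $M_r$.

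\textbf{Main obstacle.} I expect the hardest step to be the $\fq_r$ and $2$-adic conductor bookkeeping for arbitrary triples. My approach is to cite the recipes paper verbatim for these exponents rather than recompute them.
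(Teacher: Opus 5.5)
Your proposal is correct and follows the paper's route: the paper's proof consists entirely of citations, to \cite{FreitasRecipes} for modularity, the conductor and Serre level, and level lowering, and to \cite{FreitasSiksek} for irreducibility (upgraded to absolute irreducibility by oddness, a step you skip), and your local sketches for (iii) and the Tate-curve step in (i) are the content of those cited results. One small correction: (iii) holds for all coprime $a,b$ (Remark~\ref{rem:goodReduction}), so the hypothesis on the primes of $C$ is not used there but in (i), where it gives $v_\fq(f_k(a,b))=p\,v_\fq(c)$ at the bad primes $\fq$ above $q\equiv1\pmod r$.
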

\begin{proof}
Modularity follows from
\cite[Theorem~3.6 and Corollary~6.4]{FreitasRecipes}. Since $K$ is totally real and $\rhobar_{E^\bk_{a,b},p}$ is odd, its irreducibility is equivalent to absolute
irreducibility. Thus
absolute irreducibility for all sufficiently large~$p$ follows from
\cite[Theorems~1--2]{FreitasSiksek}, applied as in
\cite[Theorem~7.1]{FreitasRecipes}.
The conductor and Serre level are given in
\cite[Proposition~3.2 and Corollary~3.4]{FreitasRecipes}, and level
lowering is carried out in
\cite[Section~3.2]{FreitasRecipes}.
\end{proof}

\begin{remark}\label{rem:goodReduction}
A closer look at the proof of
\cite[Proposition~3.2]{FreitasRecipes} shows
that part~\textup{(iii)}
holds for all coprime integers $a,b$, without the requirement that they arise
from a solution of~\eqref{eq:rrp}. Moreover, the curve $E^\bk_{1,-1}$ has
good reduction at~$\fq_r$ and at every prime $\fq\nmid2r$ in $K$.
\end{remark}
\begin{proposition}\label{prop:second-modularity}
There is a constant $M_{r,C}$, depending only on~$r$ and~$C$, such that the following
holds.  Let $(a,b,c)$ be a non-trivial primitive
solution of \eqref{eq:rrp} with prime exponent $p>M_{r,C}$. Then,
\begin{enumerate}[label=\textup{(\roman*)},leftmargin=2.2em]
\item the representation $\rhobar_{F_{a,b},p}$ is absolutely irreducible and its
Serre level~$N(\rhobar_{F_{a,b},p})$ belongs to a finite set of levels supported at $2rC$;
\item there is a Hilbert newform $f$ over~$K$ of
parallel weight~$2$, trivial character, level~$N(\rhobar_{F_{a,b},p})$, and a prime $\lambda\mid p$ in its
field of coefficients $\Q_f$, such that
$\rhobar_{F_{a,b},p}\simeq\rhobar_{f,\lambda}$;
\item if $3\mid a+b$ and $3\nmid C$, then every prime $\fq\mid3$ in $K$ is a prime of multiplicative reduction for $F_{a,b}$ and
$\fq$ does not divide $N(\rhobar_{F_{a,b},p})$.
\end{enumerate}
\end{proposition}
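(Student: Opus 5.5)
Parts (i) and (ii) go exactly as in Proposition~\ref{prop:first-modularity}, now with the triple $(0,1,2)$, i.e.\ $k_1=0$. Here $f_0(a,b)=(a+b)^2$, and $a+b\neq0$ because the solution is non-trivial. Modularity of $F_{a,b}$ comes from \cite[Theorem~3.6 and Corollary~6.4]{FreitasRecipes}. Absolute irreducibility for $p$ large in terms of $r$ follows from \cite{FreitasSiksek}, applied as in \cite[Theorem~7.1]{FreitasRecipes}, together with oddness over the totally real field~$K$. Level lowering then gives the newform~$f$ as in \cite[Section~3.2]{FreitasRecipes}.

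The new feature is that $A_{a,b}=\alpha(a+b)^2$ now involves the factor $a+b$, which divides $Cc^p$. So primes of $K$ above primes dividing $C$, and above $\gcd(a+b,c)$, may be bad. For the Serre level I will compute the conductor of $F_{a,b}$ as in \cite[Proposition~3.2]{FreitasRecipes}:
\begin{itemize}
\item at primes $\fq\nmid 2rC$, the reduction is either good, or multiplicative with $p\mid v_\fq(\Delta)$, since $v_\fq(\Delta)=4v_\fq(a+b)$ comes from $c^p$; such primes disappear from the Serre level;
\item at primes above $2$ and $r$, the exponents are bounded in terms of $r$;
\item at primes above $C$, the exponents are bounded by the usual conductor bounds over~$K$, uniformly in $p$.
\end{itemize}
Hence $N(\rhobar_{F_{a,b},p})$ lies in a finite set of levels supported at $2rC$. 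The constant $M_{r,C}$ is the maximum of the irreducibility bound, $h_r$, and whatever is needed for the exponents at primes dividing $C$ to be independent of the solution.

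For (iii), suppose $3\mid a+b$ and $3\nmid C$, and let $\fq\mid3$. I first use $3\ne r$ and the good reduction properties of the $f_k$. Since $\gcd(a,b)=1$ and $3\mid a+b$, we get $f_k(a,b)\equiv a^2(2-\omega_k)\pmod 3$. Here $2-\omega_k=-\zeta_r^{-k}(\zeta_r^k-1)^2$ is a unit away from~$\fq_r$. So $f_1(a,b)$ and $f_2(a,b)$ are $\fq$-units. The differences $\alpha,\beta,\gamma$ are units times powers of $\fq_r$ only, hence are units at~$\fq$.

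Consequently $v_\fq(A_{a,b})=2v_\fq(a+b)>0$ while $v_\fq(B_{a,b})=v_\fq(C_{a,b})=0$. The model $Y^2=X(X-A)(X+B)$ is therefore minimal at $\fq$ (as $\fq\nmid2$) and has multiplicative reduction there, with the node given by $A\equiv0$ and $B\not\equiv0$.

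Next, $v_\fq(\Delta)=4v_\fq(a+b)$. Since $3\nmid C$ and $a+b\mid Cc^p$ through the factorization~\eqref{eq:cyclotomic-factorization}, we have $v_3(a+b)=v_3(x^r+y^r)-\sum_k v_3(f_k)=p\,v_3(c)$. The $f_k$ are $\fq$-units, and $3$ is unramified in~$K$ because $3\neq r$. Thus $v_\fq(a+b)=p\,v_3(c)$, and $p\mid v_\fq(\Delta_{\min})$. By the Tate curve criterion (Hellegouarch--Serre), $\rhobar_{F_{a,b},p}$ is then unramified at $\fq$ (or finite flat if $\fq\mid p$, but we take $p>3$). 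So $\fq\nmid N(\rhobar_{F_{a,b},p})$.

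The main obstacle is the uniform control of the conductor exponents at primes above $C$ and at $\fq_r$ for $F_{a,b}$ in part (i). This is where I rely on \cite{FreitasRecipes} and on the hypothesis $q\not\equiv1\pmod r$, which makes the relevant $f_k$ values coprime to $q$, so that only $a+b$ contributes there.
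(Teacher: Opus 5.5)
Your proposal is correct and follows the paper's route. Parts (i) and (ii) rest on the modularity, irreducibility and level-lowering results of \cite{FreitasRecipes} and \cite{FreitasSiksek}, and your direct computation in (iii) is exactly what the paper takes from the conductor computation it cites: $\alpha,\beta,\gamma,f_1(a,b),f_2(a,b)$ are $\fq$-units, $v_\fq(\Delta)=4v_\fq(a+b)=4p\,v_3(c)$, and Tate's criterion then removes $\fq$ from the Serre level.

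The main correction is bibliographic. $F_{a,b}$ is the Frey curve built from $f_0=(x+y)^2$, so the relevant statements are \cite[Theorem~4.4, Propositions~4.1--4.2 and Section~4.2]{FreitasRecipes}, not the Section~3 results you quote, which concern $E^\bk_{a,b}$. Also, in your first bullet, primes dividing $f_1(a,b)f_2(a,b)$ occur as well; there $v_\fq(\Delta)=2p\,v_\fq(c)$, so the conclusion is unchanged.
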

\begin{proof}
Modularity follows from
\cite[Theorem~4.4 and Corollary~6.4]{FreitasRecipes}.
Absolute irreducibility for all sufficiently large~$p$ follows from
\cite[Theorems~1--2]{FreitasSiksek}, applied as in
\cite[Theorem~7.1]{FreitasRecipes}.
The conductor and Serre level are given in
\cite[Propositions~4.1 and~4.2]{FreitasRecipes}, and level lowering is
carried out in \cite[Section~4.2]{FreitasRecipes}.
The final assertion follows from
\cite[Propositions~4.1 and~4.2]{FreitasRecipes}.
\end{proof}

\section{Forcing equality of traces at 3}

We will need the following lemma in representation theory.

\begin{lemma}\label{lem:coset}
Let $G$ be a group, $\chi:G\to\{\pm1\}$ a nontrivial character, and $F$ an algebraically closed field of characteristic 0.
Let $V$ and $U$ be 2-dimensional representations over $F$ with~$V$ irreducible and $U$ semisimple.
Suppose that
$V\not\simeq V\otimes\chi$ and that
\[
        \tr U(g)=\tr V(g)
\]
whenever $\chi(g)=-1$. Then $U\simeq V$.

Moreover, for continuous $\ell$-adic representations of the absolute Galois group of
a number field, it is enough to assume the trace equality at all but finitely
many unramified Frobenius elements satisfying $\chi(\Frob_\fq)=-1$.
\end{lemma}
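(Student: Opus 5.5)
Let $H=\ker\chi$, an index-$2$ subgroup, and fix $g_0\in G$ with $\chi(g_0)=-1$. The plan is to compare characters. Set $\phi=\tr U-\tr V$, a class function on $G$ that vanishes on the coset $g_0H$ by hypothesis. The goal is to show $\phi=0$ on $H$ as well, i.e.\ $\tr U=\tr V$ on all of $G$. Since both representations are semisimple of the same dimension over a field of characteristic~$0$, equality of characters gives $U\simeq V$ by the Brauer--Nesbitt theorem, or by linear independence of characters of irreducible representations.

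The key step uses $V\not\simeq V\otimes\chi$. For $h\in H$, the element $g_0h$ lies in the coset, so $\tr U(g_0h)=\tr V(g_0h)$ for all $h\in H$. I will extract information about $\tr U|_H$ from this twisted equality. First consider $U\otimes\chi$ and $V\otimes\chi$. On the coset, $\tr(V\otimes\chi)=-\tr V=-\tr U=\tr(U\otimes\chi)$. So the two representations $W_1=U\oplus V\otimes\chi$ and $W_2=V\oplus U\otimes\chi$ have equal traces both on the coset (there $\tr W_1=\tr U-\tr V=0=\tr W_2$) and on $H$ (there $\tr W_1=\tr U+\tr V=\tr W_2$). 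Hence $W_1\simeq W_2$ as semisimple $4$-dimensional representations. By Krull--Schmidt (Jordan--Hölder) the irreducible $V$ must occur in $U\oplus V\otimes\chi$. Since $V\not\simeq V\otimes\chi$ and $V\otimes\chi$ is irreducible, $V$ is a constituent of $U$. As $\dim U=2=\dim V$, this gives $U\simeq V$. This step is clean and I expect no real obstacle here.

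For the $\ell$-adic statement, I would run the same argument with $G=G_K$ and continuous representations over $\overline{\Q}_\ell$, using Chebotarev. Let $S$ be a finite set of primes outside which $U$, $V$ and $\chi$ are unramified, enlarged by the finitely many exceptional primes. For $\fq\notin S$ with $\chi(\Frob_\fq)=-1$ we have $\tr W_1(\Frob_\fq)=\tr W_2(\Frob_\fq)$, both sides being $0$. For $\fq\notin S$ with $\chi(\Frob_\fq)=1$ the two traces agree tautologically. So $\tr W_1$ and $\tr W_2$ agree on all Frobenius elements outside~$S$. These are dense in $G_K$ by Chebotarev, and traces are continuous, so $\tr W_1=\tr W_2$ on $G_K$. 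The argument then proceeds as before; here the semisimplicity of $W_1$ and $W_2$ is needed, which holds since $U$, $V$, $V\otimes\chi$ and $U\otimes\chi$ are semisimple.

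The main subtlety I foresee is bookkeeping rather than substance. The ground field must be algebraically closed of characteristic~$0$ (or at least characteristic~$0$) so that characters determine semisimple representations. In the $\ell$-adic case one should work over $\overline{\Q}_\ell$, which the statement permits.
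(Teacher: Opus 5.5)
Your proof is correct and is essentially the paper's argument: your $W_1=U\oplus(V\otimes\chi)$ and $W_2=V\oplus(U\otimes\chi)$ are the paper's $\rho_2$ and $\rho_1$, compared via Brauer--Nesbitt, after which $V\not\simeq V\otimes\chi$ and $\dim U=2$ force $U\simeq V$. For the $\ell$-adic part, you apply Chebotarev and continuity directly to $\tr W_1-\tr W_2$ on all of $G_K$, whereas the paper first extends the trace equality to the whole coset $\chi=-1$; this is only a cosmetic difference, and both versions implicitly use that the representations are unramified outside a finite set.
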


\begin{proof}
Consider the semisimple representations
$$
 \rho_1=V\oplus(U\otimes\chi)
 \quad\text{and}\quad
 \rho_2=U\oplus(V\otimes\chi).
$$

If $\chi(g)=1$, then $\tr\rho_1(g)=\tr\rho_2(g)$, while if
$\chi(g)=-1$, then $\tr\rho_1(g)=0=\tr\rho_2(g)$ by hypothesis.
Thus $\rho_1$ and $\rho_2$ have the same character and hence are
isomorphic by Brauer--Nesbitt.
Since $V$ is irreducible, it must occur on the right either in~$U$ or
as $V\otimes\chi$. The latter is excluded by hypothesis. Thus $V$
occurs in~$U$, and since both representations have dimension~$2$, we
obtain $U\simeq V$.

For the last statement, the Chebotarev density theorem and continuity show that the required trace equality holds for all elements $g$ satisfying $\chi(g)=-1$, and the first part applies.
\end{proof}

\begin{lemma}\label{lem:trivial-reduction}
Let $q\neq2,r$ be a prime such that $q\not\equiv1\pmod r$. Let
$\fq\mid q$ be a prime in $K$ and $a,b\in\Z$ be coprime. If $q\mid a+b$,
then, for every triple~$\bk$, the reductions of $E^\bk_{a,b}$ and
$E^\bk_{1,-1}$ are isomorphic over $\F_\fq$. In particular,
$a_\fq(E^\bk_{a,b})=a_\fq(E^\bk_{1,-1})$.
\end{lemma}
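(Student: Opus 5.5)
The plan is to work directly with the Legendre-form models $Y^2=X(X-A_{a,b})(X+B_{a,b})$ and to show that, modulo $\fq$, the coefficients $A_{a,b},B_{a,b}$ are the coefficients $A_{1,-1},B_{1,-1}$ multiplied by a common nonzero square. A common scaling by a square $u^2$ gives an isomorphism of Legendre-form curves via $(X,Y)\mapsto(u^2X,u^3Y)$. Since $q\neq 2$, $\fq\nmid 2$, and Proposition~\ref{prop:first-modularity}(iii) together with Remark~\ref{rem:goodReduction} says that both $E^\bk_{a,b}$ and $E^\bk_{1,-1}$ have good reduction at $\fq$. I will check that these given models are in fact $\fq$-integral and have unit discriminant at $\fq$, so that reducing them gives the reductions of the curves. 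Then the equality $a_\fq(E^\bk_{a,b})=a_\fq(E^\bk_{1,-1})$ follows at once from the isomorphism of reductions.

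The key computation is as follows. Since $q\mid a+b$, we have $b\equiv -a\pmod q$, and hence for each $k$,
\[
f_k(a,b)\equiv a^2-\omega_k a^2+a^2=a^2(2-\omega_k)=a^2 f_k(1,-1)\pmod{\fq}.
\]
Because $\gcd(a,b)=1$ and $q\mid a+b$, we get $q\nmid a$, so $a$ is a unit modulo $\fq$. Therefore
\[
A_{a,b}\equiv a^2A_{1,-1}\quad\text{and}\quad B_{a,b}\equiv a^2B_{1,-1}\pmod{\fq},
\]
and likewise for $C$. Applying the substitution $X\mapsto a^2X$, $Y\mapsto a^3Y$ over $\F_\fq$ carries the reduction of $E^\bk_{1,-1}$ isomorphically onto the reduction of $E^\bk_{a,b}$.

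The one point needing care is that the discriminant $2^4(A_{1,-1}B_{1,-1}C_{1,-1})^2$ is a $\fq$-unit, so that the reduction of the model is really the smooth reduction and not something degenerate. For this, I note that $2-\omega_k=(1-\zeta_r^k)(1-\zeta_r^{-k})$ is supported only at $\fq_r$. I also note that $\omega_{k_i}-\omega_{k_j}$ equals $\zeta^{-k_i}(\zeta^{k_i+k_j}-1)(\zeta^{k_i-k_j}-1)$ up to sign. Since $k_i\not\equiv\pm k_j$, this is a unit times a power of the prime above $r$, hence a $\fq$-unit for $q\neq r$. So everything is integral with unit discriminant at $\fq$ (using $q\ne2$), and the same then holds for $E^\bk_{a,b}$ by the congruence. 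This verification is the main, though mild, obstacle.

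Notably, the argument does not need the hypothesis $q\not\equiv1\pmod r$; that condition is only inherited from the good-reduction statements cited above.
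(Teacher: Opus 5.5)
Your argument is correct and is essentially the paper's proof. Both rest on the congruence $f_k(a,b)\equiv a^2 f_k(1,-1)\pmod{\fq}$ followed by the scaling $X\mapsto a^2X$, $Y\mapsto a^3Y$; the paper scales by $b^2$, which is the same modulo $\fq$. Your explicit check that both models have $\fq$-unit discriminant, via $2-\omega_k=(1-\zeta_r^k)(1-\zeta_r^{-k})$ and $\omega_{k_i}-\omega_{k_j}=\zeta_r^{-k_i}(\zeta_r^{k_i+k_j}-1)(\zeta_r^{k_i-k_j}-1)$, makes precise what the paper delegates to \Cref{prop:first-modularity}(iii) and \Cref{rem:goodReduction}. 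It also correctly shows that the hypothesis $q\not\equiv1\pmod r$ is not needed for this lemma.
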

\begin{proof}
By Proposition~\ref{prop:first-modularity}\textup{(iii)} and
Remark~\ref{rem:goodReduction}, both curves have good reduction at~$\fq$.
Since $q\mid a+b$ and $a,b$ are coprime, $q\nmid ab$. We have
$a\equiv-b\pmod\fq$, therefore
\[
 f_k(a,b)\equiv(2-\omega_k)b^2\pmod\fq,
\]
and it follows that
\[
 A_{a,b}\equiv b^2A_{1,-1} \pmod\fq,
 \qquad
 B_{a,b}\equiv b^2B_{1,-1}\pmod\fq.
\]
Thus the change of variables $X=b^2X'$ and $Y=b^3Y'$ identifies their
reductions over~$\F_\fq$.
\end{proof}

We can now prove the main result of this section.

\begin{proposition}\label{prop:finite-L}
Fix $r \geq 11$ a prime and a triple $\bk$. There exist a finite set $\cL_\bk$ of rational
primes and a constant $M_\bk$ such that:
\begin{enumerate}[label=\textup{(\roman*)},leftmargin=2.2em]
\item every $\ell\in\cL_\bk$ satisfies
$\ell\neq2,3,r$ and $\ell\not\equiv1\pmod r$;
\item if $(a,b,c)$ is a non-trivial primitive solution of~\eqref{eq:rrp} with exponent
$p>M_\bk$ and every $\ell\in\cL_\bk$ divides $a+b$, then
$$
        a_\fq(E^\bk_{a,b})=a_\fq(E^\bk_{1,-1})
        \qquad\text{for every }\fq\mid3.
$$
\end{enumerate}
\end{proposition}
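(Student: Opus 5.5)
The plan is to combine level lowering (Proposition~\ref{prop:first-modularity}) with the residual trace equalities supplied by Lemma~\ref{lem:trivial-reduction}. The finite set $\cL_\bk$ will be chosen to eliminate every newform $f$ whose traces at the primes above~$3$ differ from those of $E^\bk_{1,-1}$.

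Let $\mathcal{N}$ be the finite set of levels in Proposition~\ref{prop:first-modularity}(i). Let $\mathcal{F}$ be the finite set of Hilbert newforms over $K$ of parallel weight~$2$, trivial character, and level in $\mathcal{N}$. Split $\mathcal{F}=\mathcal{F}_1\sqcup\mathcal{F}_2$, where $\mathcal{F}_1$ consists of those $f$ with $a_\fq(f)=a_\fq(E^\bk_{1,-1})$ for every $\fq\mid3$; note that $3\nmid N(f)$ and, by Remark~\ref{rem:goodReduction}, $E^\bk_{1,-1}$ has good reduction at $\fq\mid3$. For each $f\in\mathcal{F}_2$ fix a prime $\fq_f\mid3$ with $a_{\fq_f}(f)\neq a_{\fq_f}(E^\bk_{1,-1})$.

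For such $f$ I will produce a prime $\ell_f\neq2,3,r$ with $\ell_f\not\equiv1\pmod r$, together with a prime $\fq'\mid\ell_f$ of $K$, such that $a_{\fq'}(f)\neq a_{\fq'}(E^\bk_{1,-1})$. This is the step I expect to be the main obstacle. The constraint $\ell\not\equiv1\pmod r$ says exactly that $\Frob_\ell$ is nontrivial in $\mathrm{Gal}(\Q(\zeta_r)/\Q)$. Since $r\geq 11$, the degree $n=(r-1)/2$ is at least~$5$; choose a quadratic character $\chi$ of $G_K$ cut out by a suitable quadratic subextension, so that $\chi(\Frob_{\fq'})=-1$ forces the residue degree condition.

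I will argue by contradiction. Suppose $a_{\fq'}(f)=a_{\fq'}(E^\bk_{1,-1})$ for all but finitely many $\fq'$ with $\chi(\Frob_{\fq'})=-1$. Then Lemma~\ref{lem:coset}, applied with $V=\rho_{E^\bk_{1,-1},\lambda}$ (irreducible since the curve has no CM) and $U=\rho_{f,\lambda}$, gives $\rho_{f,\lambda}\simeq\rho_{E^\bk_{1,-1},\lambda}$, which contradicts the choice of $\fq_f$. The hypothesis $V\not\simeq V\otimes\chi$ of Lemma~\ref{lem:coset} also has to be checked, i.e.\ $E^\bk_{1,-1}$ should not have CM by $\chi$. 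I would secure this by choosing $\chi$ appropriately among the available options, or by comparing a single explicit trace.

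The delicate point is that I need $\chi(\Frob_{\fq'})=-1$ to imply $\ell\not\equiv1\pmod r$. If $\ell\equiv1\pmod r$, then $\ell$ splits completely in $\Q(\zeta_r)$ and $\Frob_{\fq'}$ is trivial there. So I should take $\chi$ to be nontrivial on $G_K$ but trivial on $G_{\Q(\zeta_r)}$, namely the character of $\Q(\zeta_r)/K$. Then $\chi(\Frob_{\fq'})=-1$ means $\fq'$ is inert in $\Q(\zeta_r)/K$, which forces $\ell\not\equiv1\pmod r$.

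Finally, set $\cL_\bk=\{\ell_f: f\in\mathcal{F}_2\}$ and take $M_\bk$ at least $M_r$ and larger than every prime dividing the nonzero integers $\Norm(a_{\fq_f}(f)-a_{\fq_f}(E^\bk_{1,-1}))$ and $\Norm(a_{\fq'}(f)-a_{\fq'}(E^\bk_{1,-1}))$ for $f\in\mathcal{F}_2$. Now let $(a,b,c)$ be a non-trivial primitive solution with $p>M_\bk$ and $\ell\mid a+b$ for all $\ell\in\cL_\bk$, and let $f$ be the newform given by Proposition~\ref{prop:first-modularity}. If $f\in\mathcal{F}_2$, then Lemma~\ref{lem:trivial-reduction} at $\fq'\mid\ell_f$ gives
\[
a_{\fq'}(f)\equiv a_{\fq'}(E^\bk_{a,b})=a_{\fq'}(E^\bk_{1,-1})\pmod\lambda,
\]
since $E^\bk_{a,b}$ has good reduction at $\fq'$ and $\fq'\nmid p$. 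This forces $p$ to divide the norm of a nonzero difference, which is impossible. Hence $f\in\mathcal{F}_1$. Then, since $E^\bk_{a,b}$ has good reduction at $\fq\mid3$ by Proposition~\ref{prop:first-modularity}(iii), we get
\[
a_\fq(E^\bk_{a,b})\equiv a_\fq(f)=a_\fq(E^\bk_{1,-1})\pmod\lambda.
\]
Both outer quantities are integers bounded by $2\sqrt{3^{f_\fq}}$ via the Hasse bound. So once $p>4\cdot 3^{n/2}$, the congruence becomes an equality, which proves (ii).
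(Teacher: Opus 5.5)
Your route is the paper's: the character $\eps$ of $\Q(\zeta_r)/K$, then \Cref{lem:coset} to find, for each problematic newform $f$, a prime $\fq'$ inert in $\Q(\zeta_r)/K$ with $a_{\fq'}(f)\neq a_{\fq'}(E^\bk_{1,-1})$. After that come \Cref{lem:trivial-reduction} and a norm argument to exclude $f$, and the Hasse bound at the end. Sorting the newforms by equality of traces at the primes above $3$, instead of by whether $U_f\simeq V$, is an inessential variation.

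\textbf{The gap.} You never verify the hypothesis $V\not\simeq V\otimes\eps$ of \Cref{lem:coset}, and neither remedy you suggest works. Your own requirement that $\chi(\Frob_{\fq'})=-1$ force $\ell\not\equiv1\pmod r$ leaves no choice but $\chi=\eps$. Comparing one explicit trace is not an argument uniform in $r$ and $\bk$. Your claim that $E^\bk_{1,-1}$ has no CM is unsupported. Irreducibility of $V$ does not need it, but $V\not\simeq V\otimes\eps$ is precisely a condition of this kind (no CM becoming defined over $\Q(\zeta_r)$), so it cannot simply be assumed.

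\textbf{The missing idea} is a ramification comparison at $\fq_r$. By \Cref{rem:goodReduction}, $E^\bk_{1,-1}$ has good reduction at $\fq_r$. So for $q$-adic coefficients with $q\neq r$, the representation $V$ is unramified at $\fq_r$. On the other hand, $\eps$ is ramified at $\fq_r$, since $\fq_r$ ramifies in $\Q(\zeta_r)/K$. Restricting an isomorphism $V\simeq V\otimes\eps$ to an inertia group $I$ at $\fq_r$ would make the trivial $2$-dimensional representation of $I$ isomorphic to $\eps|_I\oplus\eps|_I$, which is absurd.

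With this, your argument goes through after two small repairs:
\begin{enumerate}[label=\textup{(\alph*)}]
\item You assert $\fq'\nmid p$, but your $M_\bk$ does not exclude $p=\ell_f$. In that case $\fq'\mid p$ and the mod-$\lambda$ trace comparison at $\fq'$ is unavailable. Include every $\ell_f$ in the bound, as the paper does.
\item Fix the $q$-adic coefficients with $q\nmid 6r$. Then $U_f\simeq V$ genuinely yields $a_\fq(f)=a_\fq(E^\bk_{1,-1})$ at the primes $\fq\mid3$, and the inertia argument at $\fq_r$ applies.
\end{enumerate}
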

\begin{proof}
Let $\eps:G_K\to\{\pm1\}$ be the character corresponding to the extension $\Q(\zeta_r)/K$.

Fix a prime $q\nmid6r$ and let $V$ be the $G_K$ representation arising on the $q$-adic Tate module of $E^\bk_{1,-1}$ with scalars extended to~$\overline{\Q}_q$. The representation $V$ is absolutely irreducible. Moreover, $V\not\simeq V\otimes\eps$: the representation $V$ is unramified at~$\fq_r$ ($E^\bk_{1,-1}$ has good reduction at~$\fq_r$ by \Cref{rem:goodReduction}), whereas $\eps$ ramifies at~$\fq_r$.

Denote by~$\cF_\bk$ the finite set of newforms that can
occur after level lowering as described by \Cref{prop:first-modularity}.
For each $f\in\cF_\bk$, take a prime $\fQ \mid q$ in $\Q_f$ and let $U_f$ be its $\fQ$-adic representation extended to~$\Qbar_q$. Suppose that $U_f\not\simeq V$.

By \Cref{lem:coset}, there are infinitely many primes $\fq\nmid6rqN_fN_{E^\bk_{1,-1}}$ such that $\eps(\Frob_\fq)=-1$ and $a_\fq(f)\neq a_\fq(E^\bk_{1,-1})$. Choose one such prime $\fq_f$ and let $\ell_f$ be the rational prime below it. Then $\ell_f\not\equiv1\pmod r$, since a prime congruent to~$1$ modulo~$r$ splits completely in~$\Q(\zeta_r)$. We have $D_f:=\Norm_{\Q_f/\Q}(a_{\fq_f}(f)-a_{\fq_f}(E^\bk_{1,-1}))\in\Z_{\neq 0}$.

Let $\cL_\bk$ be the set of the primes $\ell_f$ obtained in this way. Choose $M_\bk$ larger than the bound in \Cref{prop:first-modularity}, every $\ell_f$, every prime divisor of all the~$D_f$, and $4\sqrt{\Norm\fq}$ for every $\fq\mid3$.

Now let $(a,b,c)$ satisfy the hypotheses of the proposition, and let $f$ and $\lambda\mid p$ be given by \Cref{prop:first-modularity}, so that $\rhobar_{f,\lambda}\simeq\rhobar_{E^\bk_{a,b},p}$. Suppose that $U_f\not\simeq V$. Then $\ell_f\in\cL_\bk$, and hence $\ell_f\mid a+b$ by assumption. Thus \Cref{lem:trivial-reduction} gives $a_{\fq_f}(E^\bk_{a,b})=a_{\fq_f}(E^\bk_{1,-1})$. The residual isomorphism implies  $\lambda\mid a_{\fq_f}(f)-a_{\fq_f}(E^\bk_{1,-1})$ and taking norms gives $p\mid D_f$, a contradiction. Thus $U_f\simeq V$.

Therefore, for every $\fq\mid3$, we have $a_\fq(f)=a_\fq(E^\bk_{1,-1})$ in~$\Q_f$, and again by the residual isomorphism we obtain $a_\fq(E^\bk_{a,b})\equiv a_\fq(E^\bk_{1,-1})\pmod p$. The conclusion now follows from the Hasse bound and the choice of~$M_\bk$.
\end{proof}

\begin{corollary}\label{cor:simultaneous-forcing}
Let $\cT$ be a finite set of triples. There exist a finite set $\cL_\cT$ of prime numbers and a constant $M_\cT$ such that, if $(a,b,c)$ is a non-trivial primitive
solution of~\eqref{eq:rrp} with exponent $p>M_\cT$ and every
$\ell\in\cL_\cT$ divides $a+b$, then
\[
 a_\fq(E^\bk_{a,b})=a_\fq(E^\bk_{1,-1}) \quad \text{ for all } \;
  \bk\in\cT \text{ and all } \fq\mid3.
\]
Every prime in $\cL_\cT$ is different from $2,3,r$ and is not congruent to
$1$ modulo~$r$.
\end{corollary}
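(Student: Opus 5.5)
The corollary follows by applying \Cref{prop:finite-L} to each triple in $\cT$ separately and combining the outputs. For each $\bk\in\cT$, \Cref{prop:finite-L} provides a finite set $\cL_\bk$ of primes and a constant $M_\bk$. We set
\[
 \cL_\cT=\bigcup_{\bk\in\cT}\cL_\bk,
 \qquad
 M_\cT=\max_{\bk\in\cT}M_\bk .
\]
Since $\cT$ is finite, $\cL_\cT$ is a finite union of finite sets, hence finite, and the maximum defining $M_\cT$ is taken over finitely many numbers. By \Cref{prop:finite-L}\textup{(i)}, every prime in each $\cL_\bk$ is different from $2,3,r$ and is not congruent to $1$ modulo~$r$. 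These properties therefore hold for every element of the union $\cL_\cT$.

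Now let $(a,b,c)$ be a non-trivial primitive solution of~\eqref{eq:rrp} with exponent $p>M_\cT$, and suppose every $\ell\in\cL_\cT$ divides $a+b$. Fix $\bk\in\cT$. Then $p>M_\cT\geq M_\bk$, and every $\ell\in\cL_\bk\subseteq\cL_\cT$ divides $a+b$. Hence the hypotheses of \Cref{prop:finite-L}\textup{(ii)} hold for this $\bk$, and we obtain $a_\fq(E^\bk_{a,b})=a_\fq(E^\bk_{1,-1})$ for every $\fq\mid3$. Since $\bk\in\cT$ was arbitrary, the equalities hold simultaneously for all $\bk\in\cT$ and all $\fq\mid 3$.

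There is no real obstacle. The only thing to check is that the hypotheses of \Cref{prop:finite-L} for each individual triple are implied by the combined hypotheses, and they are: enlarging the set of primes that divide $a+b$ and raising the lower bound on $p$ only strengthen the assumptions. Note also that each $M_\bk$ was chosen to exceed the bound $M_r$ of \Cref{prop:first-modularity}, so $M_\cT$ does as well.
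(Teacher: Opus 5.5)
Your proposal is correct and is exactly the paper's argument: take $\cL_\cT$ to be the union of the sets $\cL_\bk$ and $M_\cT$ the maximum of the bounds $M_\bk$ from \Cref{prop:finite-L}, with $\bk$ ranging over $\cT$. The combined hypotheses then imply the hypotheses of \Cref{prop:finite-L} for each individual triple.
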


\begin{proof}
Take the union of the sets $\cL_\bk$ and the maximum of the corresponding
bounds in \Cref{prop:finite-L}.
\end{proof}

\section{Reduction to trace separation at 3}\label{sec:trace-reduction}

In the previous section, we proved that we can force equality of traces at primes $\fq \mid 3$ of the Frey curve $E^\bk_{a,b}$ and the obstructing curve $E^\bk_{1,-1}$ associated with the trivial solution $(1,-1,0)$.
In this section, using the multi-Frey technique with both $E^\bk_{a,b}$ and $F_{a,b}$, we will reduce the proof of Theorem~\ref{thm:main} to the problem of distinguishing traces at primes $\fq \mid 3$.

Note that for $[v:w]\in\PP^1(\F_3)$ the curve $E^\bk_{v,w}$ over $\F_\fq$ is an
elliptic curve by \Cref{rem:goodReduction}.

\begin{theorem}\label{thm:local-criterion}
Let $r\geq11$ be prime. Suppose there is a finite set $\cT$ of triples such
that, for every $[v:w]\in\PP^1(\F_3)$ different from $[1:-1]$, there exist
$\bk\in\cT$ and $\fq\mid3$ in $K$ satisfying
\[
        a_\fq(E^\bk_{v,w})\neq a_\fq(E^\bk_{1,-1}).
\]
Then the conclusion of \Cref{thm:main} holds for~$r$.
\end{theorem}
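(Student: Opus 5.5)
We take $\cL_r:=\cL_\cT$ from \Cref{cor:simultaneous-forcing} applied to the given set $\cT$. By that corollary every $\ell\in\cL_r$ satisfies $\ell\neq2,3,r$ and $\ell\not\equiv1\pmod r$. Now let $C>2$ satisfy the hypotheses of \Cref{thm:main}. Let $(a,b,c)$ be a non-trivial primitive solution of~\eqref{eq:rrp} with $p$ larger than $M_\cT$, $M_{r,C}$, $M_r$, $h_r$ and a further bound fixed below.

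\textbf{Step 1: every $\ell\in\cL_r$ divides $a+b$.} Each $\ell\in\cL_r$ divides $C$, hence divides $a^r+b^r=(a+b)\prod_k f_k(a,b)$. Since $\ell\neq r$ and $\ell\nmid ab$ (by primitivity, as $\ell\mid a^r+b^r$), we have $(-a/b)^r\equiv1\pmod\ell$. The order of $-a/b$ in $\F_\ell^\times$ divides $\gcd(r,\ell-1)=1$ because $\ell\not\equiv1\pmod r$. Thus $a\equiv-b\pmod\ell$. \Cref{cor:simultaneous-forcing} then gives $a_\fq(E^\bk_{a,b})=a_\fq(E^\bk_{1,-1})$ for all $\bk\in\cT$ and all $\fq\mid3$.

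\textbf{Step 2: $3\mid a+b$.} Since $\gcd(a,b)=1$, the pair $(a,b)$ reduces to a point $[v:w]\in\PP^1(\F_3)$. Let $\fq\mid3$ and $\bk\in\cT$. The reduction of $E^\bk_{a,b}$ mod $\fq$ has coefficients $A_{a,b},B_{a,b}$, which are homogeneous quadratics in $(a,b)$. Scaling $(a,b)$ by a unit $u$ mod $3$ multiplies them by $u^2$, which yields a quadratic twist by $u$, and that twist is trivial since $u^2=1$. More precisely, the change of variables $X=u^2X'$, $Y=u^3Y'$ used in \Cref{lem:trivial-reduction} shows that the reduction depends only on $[v:w]$. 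Hence $a_\fq(E^\bk_{a,b})=a_\fq(E^\bk_{v,w})$, where good reduction holds by \Cref{rem:goodReduction}, since $3\neq2,r$ and $3\not\equiv1\pmod r$. If $[v:w]\neq[1:-1]$, the hypothesis gives some $\bk\in\cT$ and some $\fq\mid3$ with $a_\fq(E^\bk_{v,w})\neq a_\fq(E^\bk_{1,-1})$, contradicting Step~1. So $[a:b]=[1:-1]$ in $\PP^1(\F_3)$, that is, $3\mid a+b$.

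\textbf{Step 3: contradiction via $F_{a,b}$.} Since $3\nmid C$ and $3\mid a+b$, \Cref{prop:second-modularity}(iii) says every $\fq\mid3$ is a prime of multiplicative reduction for $F_{a,b}$ and does not divide the Serre level. Take $f$ and $\lambda\mid p$ from \Cref{prop:second-modularity}(ii). The set of possible $f$ is finite and depends only on $r$ and $C$. Fix $\fq\mid3$. Because $\fq$ does not divide the level of $f$, comparing traces of Frobenius at $\fq$ in the mod $p$ representations (via Tate curve theory, with $\fq\nmid p$) gives
\[
a_\fq(f)\equiv\pm(\Norm\fq+1)\pmod\lambda.
\]
The left side is bounded by $2\sqrt{\Norm\fq}$ under every embedding, by the Ramanujan bound. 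So $\Norm_{\Q_f/\Q}\bigl(a_\fq(f)\mp(\Norm\fq+1)\bigr)$ is a nonzero integer divisible by $p$. It is nonzero because $\Norm\fq+1>2\sqrt{\Norm\fq}$. Taking $p$ larger than all prime divisors of these finitely many integers gives a contradiction, and this proves the theorem.

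The step that needs the most care is Step~2. One must make sure that the reduction of $E^\bk_{a,b}$ at $\fq\mid3$ depends only on $[a:b]\in\PP^1(\F_3)$, including the identification with the curve $E^\bk_{v,w}$ over $\F_\fq$ mentioned before the statement. One must also check that good reduction holds at all four points, not just at $[1:-1]$. Step~3 is standard once \Cref{prop:second-modularity}(iii) is granted.
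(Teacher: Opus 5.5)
Your proposal is correct and follows the paper's proof essentially step for step. You set $\cL_r=\cL_\cT$, use that every prime divisor of $C$ divides $a+b$ to obtain the trace equalities of \Cref{cor:simultaneous-forcing}, use that the reduction of $E^\bk_{a,b}$ at $\fq\mid3$ depends only on $[a:b]\in\PP^1(\F_3)$ to force $3\mid a+b$, and then switch to $F_{a,b}$, where level lowering at a prime above $3$ contradicts the Ramanujan bound for large $p$. The differences are cosmetic: you justify $\ell\mid a+b$ explicitly via the order of $-a/b$ in $\F_\ell^\times$, and you treat the two signs with two separate norms instead of the paper's single norm of $a_{\fq_0}(g)^2-(\Norm\fq_0+1)^2$.
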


\begin{proof}
Let $\cT$ be as in the statement and let $M_\cT$ and $\cL_\cT$ be given by
\Cref{cor:simultaneous-forcing}. Set $\cL_r=\cL_\cT$.
Let $C>2$ be an integer with $3\nmid C$ such that every prime divisor
$q$ of $C$ satisfies $q\neq r$ and $q\not\equiv1\pmod r$, and suppose
that every prime in~$\cL_r$ divides~$C$. Denote
by~$\cF_{r,C}$ the finite set of newforms that can occur after level lowering
as described by \Cref{prop:second-modularity}.
Choose a prime $\fq_0\mid3$.  Since
$3\nmid C$, the prime $\fq_0$ does not divide the level of any form in~$\cF_{r,C}$. Thus, for
$g\in\cF_{r,C}$, we can define the integer
\begin{equation}\label{eq:second-newform-norm}
 A_g:=\Norm_{\Q_g/\Q}\bigl(a_{\fq_0}(g)^2-(\Norm \fq_0+1)^2\bigr).
\end{equation}
We have $A_g \neq 0$ by the Ramanujan bound
$|\sigma(a_{\fq_0}(g))|\leq2\sqrt{\Norm \fq_0}<\Norm \fq_0+1$ where $\sigma$ is any complex embedding (see~\cite{BlasiusRamanujan}). 
Choose $B_{r,C}$ larger than $M_\cT$, the bound in
\Cref{prop:second-modularity}, every prime divisor of~$C$, and every prime divisor of~$A_g$ for $g\in\cF_{r,C}$. 

Now let $p>B_{r,C}$ be prime and suppose that $(a,b,c)$ is a non-trivial primitive solution of~\eqref{eq:rrp} with exponent~$p$ and coefficient~$C$.

Every prime $q\mid C$ satisfies $q \not\equiv 1 \pmod r$, hence it divides $a+b$. In particular, every prime in~$\cL_\cT$ divides $a+b$, so \Cref{cor:simultaneous-forcing} gives
\[
 a_\fq(E^\bk_{a,b})=a_\fq(E^\bk_{1,-1}) \quad \text{ for all } \;
  \bk\in\cT \text{ and all } \fq\mid3.
\]
Let $[v:w]$ be the reduction of $[a:b]$ in $\PP^1(\F_3)$. The reduction of
$E^\bk_{a,b}$ is isomorphic to $E^\bk_{v,w}$, so $[v:w]=[1:-1]$ by our assumption. We conclude that $3\mid a+b$.

We now switch Frey curves.
By \Cref{prop:second-modularity} (iii), the curve $F_{a,b}$ has multiplicative reduction at
$\fq_0$, the prime $\fq_0$ does not divide $N(\rhobar_{F_{a,b},p})$, and there exist
$g\in\cF_{r,C}$ and $\lambda\mid p$ in~$\Q_g$ such that
$\rhobar_{F_{a,b},p}\simeq\rhobar_{g,\lambda}$. This means that level lowering modulo~$p$ occurs at $\fq_0$, therefore $a_{\fq_0}(g)\equiv\pm( \Norm \fq_0 +1)\pmod\lambda$  
and so $p \mid A_g$, a contradiction.
\end{proof}
For a triple $\bk$, we set
\[
 A_t=\alpha(1+\omega_{k_1}t+t^2),\qquad
 B_t=\beta(1+\omega_{k_2}t+t^2),\qquad
 C_t=\gamma(1+\omega_{k_3}t+t^2).\]
We have $A_t+B_t+C_t=0$ and define
\begin{equation}\label{eq:Et}
E^\bk_t : y^2 = x^3+ (B_t - A_t)x^2-A_tB_tx \quad \text{ and } \quad h_t:= B_t - A_t.
\end{equation}
The Frey curve $E^\bk_{a,b}$ reduces modulo a prime $\fq \mid 3$ to $E^\bk_{v,w}$ over $k = \F_\fq$ with $[v:w] \in \PP^1(\F_3)$.
After the change of variables $t=w/v$ when $v \neq 0$, the curve $E^\bk_{v,w}$ becomes $E^\bk_t$. Since $f_k(1,0)=f_k(0,1)=1$, 
when $v=0$,
we obtain
$E^\bk_{0,w} = E^{\bk}_\infty=E^{\bk}_0$.
Therefore, we have the four parameters $t=-1,0,1,\infty$. In view of
Theorem~\ref{thm:local-criterion}, we must separate the traces at $t=0,1$
from the trace at $t=-1$.
The rest of this paper is concerned with this separation.

\section{The trace of Frobenius at 3 modulo~$3$}

To understand these traces, we first study their reduction modulo~$3$. 

Let $q = 3^f$ and $k = \F_q$. 
Let $\chi:k\to\{0,\pm1\}$ be the quadratic character, defined by
\begin{equation}\label{eq:chi}
\chi(0)=0,\qquad
\chi(z)=
\begin{cases}
1,& z\in k^{\times2},\\
-1,& z\in k^\times\setminus k^{\times2}.
\end{cases}
\end{equation}

Consider an elliptic curve of the shape
$E/k:y^2=x^3+dx^2+cx$.
\begin{lemma}\label{lem:hasse-quotient}
We have
\[
a_k(E)\equiv\Norm_{k/\F_3}(d)\pmod3.
\]
\end{lemma}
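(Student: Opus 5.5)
We use the classical description of the trace modulo the characteristic through the Hasse invariant. For an elliptic curve $y^2=g(x)$ over $k=\F_q$ with $q=3^f$ and $g$ a cubic, the number of points satisfies
\[
\#E(k)=q+1+\sum_{x\in k}\chi(g(x)),
\qquad\text{so}\qquad
a_k(E)=-\sum_{x\in k}\chi(g(x)).
\]
Put $m=(q-1)/2$. Euler's criterion gives $\chi(z)=z^{m}$ in $k$ for every $z\in k$; this includes $z=0$, since $m\geq1$. The identity $\chi(z)=z^m$ holds in $k$, and hence the congruence $a_k(E)\equiv-\sum_x g(x)^m$ holds in $k$. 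Because $a_k(E)$ is an integer, its image in $k$ lies in the prime field $\F_3$, and reduction modulo $3$ is the same as the image in $k$. So it suffices to compute $-\sum_{x\in k}g(x)^m$ in $k$.

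Expand $g(x)^m=\sum_j c_j x^j$. We use the standard fact that $\sum_{x\in k}x^j$ equals $-1$ when $j>0$ and $(q-1)\mid j$, and equals $0$ otherwise; the convention $0^0=1$ handles $j=0$. The polynomial $g^m$ has degree $3m<2(q-1)$, so the only surviving exponent is $j=q-1$. This yields
\[
a_k(E)\equiv c_{q-1}\pmod 3,
\]
where $c_{q-1}$ is the coefficient of $x^{q-1}$ in $(x^3+dx^2+cx)^{m}$. This is the Hasse invariant formula.

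It remains to identify $c_{q-1}$. Write $(x^3+dx^2+cx)^m=x^m(x^2+dx+c)^m$, so we need the coefficient of $x^{m}$ in $(x^2+dx+c)^m$. This is the step where characteristic $3$ must be used; for general $p$ one obtains a Legendre-type polynomial. The key observation in characteristic $3$ is $m=(3^f-1)/2=1+3+\cdots+3^{f-1}$. Hence, by Frobenius,
\[
(x^2+dx+c)^m=\prod_{i=0}^{f-1}\bigl(x^{2\cdot3^i}+d^{3^i}x^{3^i}+c^{3^i}\bigr).
\]
To obtain the monomial $x^m$, we choose from the $i$-th factor an exponent $e_i\,3^i$ with $e_i\in\{0,1,2\}$, subject to $\sum_i e_i3^i=\sum_i 3^i$. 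Base-$3$ expansions with digits in $\{0,1,2\}$ are unique, so $e_i=1$ for every $i$. The coefficient is therefore
\[
\prod_{i=0}^{f-1} d^{3^i}=\Norm_{k/\F_3}(d).
\]

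Putting these together gives $a_k(E)\equiv\Norm_{k/\F_3}(d)\pmod 3$, as claimed. The only delicate point is the digit-uniqueness argument in the last step, and it is immediate.
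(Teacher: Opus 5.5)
Your proposal is correct and follows the paper's proof step by step: you write $a_k(E)$ as a character sum, apply Euler's criterion with $m=(q-1)/2$, use the power-sum vanishing to isolate the coefficient of $x^{q-1}$, and expand $(x^2+dx+c)^m$ via Frobenius using $m=1+3+\cdots+3^{f-1}$. Your base-$3$ digit-uniqueness argument makes explicit the step the paper only asserts as ``the only way to obtain $x^N$''.
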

\begin{proof}
Set $N=(q-1)/2$.
For each fixed $x\in k$, the number of $y\in k$ satisfying
$y^2=x^3+dx^2+cx$ is $1+\chi(x^3+dx^2+cx)$. From
$a_k(E)=q+1-\#E(k)$, we obtain
\[
a_k(E)=-\sum_{x\in k}\chi(x^3+dx^2+cx)
\]
and, after identifying $\chi(z)$ with $z^N$ in~$k$ and reducing $a_k(E)$ modulo~3, we have
\[
a_k(E) = -\sum_{x\in k}(x^3+dx^2+cx)^N \quad \text{ in } k.
\]
Note that, for $m>0$, one has
$\sum_{x\in k}x^m=-1$ if $q-1\mid m$ and
$\sum_{x\in k}x^m=0$ otherwise. Since
$3N=3(q-1)/2<2(q-1)$, the only positive multiple of $q-1$ occurring
as an exponent after expanding $(x^3+dx^2+cx)^N$ is $q-1$.
Thus, $a_k(E)$ is the coefficient of $x^{q-1}$ in the previous polynomial sum. Moreover, writing
\[
(x^3+dx^2+cx)^N=\sum_m c_mx^m,
\]
we obtain
\[
-\sum_{x\in k}(x^3+dx^2+cx)^N
=
-\sum_m c_m\sum_{x\in k}x^m
=
-c_{q-1}\sum_{x\in k}x^{q-1}
=
c_{q-1}.
\]
Therefore,
from $2N = q-1$ and
$(x^3+dx^2+cx)^N =x^N(x^2+dx+c)^N$,
it remains to compute the coefficient of $x^N$ in
$(x^2+dx+c)^N$. Since $N=1+3+\cdots+3^{f-1}$, in characteristic~$3$
we have
$$
(x^2+dx+c)^N
=
\prod_{i=0}^{f-1}
\left(x^{2\cdot3^i}+d^{3^i}x^{3^i}+c^{3^i}\right)
$$
and the only
way to obtain $x^N$ is to choose $d^{3^i}x^{3^i}$ from every factor. 
Thus
\[
a_k(E)\equiv \prod_{i=0}^{f-1}d^{3^i}
=\Norm_{k/\F_3}(d)\pmod3.
\]
\end{proof}
We now apply the lemma to the curve defined in~\eqref{eq:Et}, for which
$d=h_t$ and $t\in\F_3$.
 \begin{corollary}\label{cor:hasse-quotient}
If both $h_t$ and $h_{-1}$ are non-zero, then
\[
 a_k(E^{\bk}_t)\equiv a_k(E^{\bk}_{-1})\pmod3
 \quad\Longleftrightarrow\quad
 \frac{h_t}{h_{-1}}\in k^{\times2}.
\]
If exactly one of $h_t$ and $h_{-1}$ vanishes, 
then $a_k(E^{\bk}_t) \not\equiv a_k(E^{\bk}_{-1})\pmod3$.
\end{corollary}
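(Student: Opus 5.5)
The plan is to apply \Cref{lem:hasse-quotient} to the curves $E^\bk_t$ and $E^\bk_{-1}$ over $k$. Both have the shape $y^2=x^3+dx^2+cx$, with $d=h_t$ and $d=h_{-1}$ respectively. This gives
\[
a_k(E^\bk_t)\equiv\Norm_{k/\F_3}(h_t),\qquad a_k(E^\bk_{-1})\equiv\Norm_{k/\F_3}(h_{-1})\pmod 3.
\]
So the whole question reduces to comparing two elements of $\F_3=\{0,\pm1\}$.

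The key step is the identity $\Norm_{k/\F_3}(z)=z^{1+3+\cdots+3^{f-1}}=z^{(q-1)/2}=\chi(z)$ for $z\in k$. This holds because the Galois group of $k/\F_3$ is generated by Frobenius $z\mapsto z^3$, so the norm is the product of the conjugates $z^{3^i}$. The value $z^{(q-1)/2}$ is $1$ on nonzero squares and $-1$ on nonsquares, by Euler's criterion in the cyclic group $k^\times$, and it is $0$ at $z=0$. Hence
\[
a_k(E^\bk_t)\equiv\chi(h_t),\qquad a_k(E^\bk_{-1})\equiv\chi(h_{-1})\pmod 3.
\]

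For the first assertion, suppose $h_t,h_{-1}\neq0$. Then both values lie in $\{\pm1\}$, and since $1\not\equiv-1\pmod 3$, congruence holds exactly when $\chi(h_t)=\chi(h_{-1})$. By multiplicativity of $\chi$ on $k^\times$, this is equivalent to $\chi(h_t/h_{-1})=1$, that is, $h_t/h_{-1}\in k^{\times2}$.

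For the second assertion, if exactly one of $h_t,h_{-1}$ vanishes, one residue is $0$ and the other is $\pm1$. These are incongruent modulo $3$, which gives the claim.

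There is no serious obstacle here. The only point needing care is the identification of the norm with the quadratic character $\chi$, which is a one-line computation with the exponent $N=(q-1)/2$ already appearing in the proof of \Cref{lem:hasse-quotient}. The statement tacitly assumes that $E^\bk_t$ and $E^\bk_{-1}$ are elliptic curves over $k$, and this is guaranteed by \Cref{rem:goodReduction}.
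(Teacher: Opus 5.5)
Your proposal is correct and follows the paper's proof: apply \Cref{lem:hasse-quotient} with $d=h_t$ and $d=h_{-1}$, then identify $\Norm_{k/\F_3}(z)=z^{(3^f-1)/2}$ with $\chi(z)$. After that, comparing the resulting values in $\{0,\pm1\}\subset\F_3$ gives both assertions.
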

\begin{proof}
From the lemma we have the equalities in $\F_3$
\[
 a_k(E^{\bk}_t) = \Norm_{k/\F_3}(h_t) \quad \text{ and }  \quad a_k(E^{\bk}_{-1}) = \Norm_{k/\F_3}(h_{-1}).
\]
For $x\in k^\times$, we have $\Norm_{k/\F_3}(x) = x^{(3^f-1)/2} = \chi(x)$, where $\chi$ is the quadratic character defined in~\eqref{eq:chi}.
Thus, when both $h_t$ and $h_{-1}$ are non-zero, the traces are congruent modulo~$3$
if and only if $h_t/h_{-1}$ is a square. If exactly one vanishes, then the
corresponding trace is $0$ modulo~$3$ and the other one is $\pm1$.
\end{proof}

\section{Separating traces at $t=1$ from $t=-1$}
\label{sec:t=1}

We now take a prime $\fq\mid3$ of $K$ and write
$k=\F_\fq\simeq\F_{3^f}$. Since $K/\Q$ is Galois, the integer $f$
is independent of~$\fq$ and is the least positive integer such that
$3^f\equiv\pm1\pmod r$. Let $\mu_r$ be the group of $r$-th roots of
unity in $\overline{k}$. Let $\chi$ be the quadratic character defined explicitly
in~\eqref{eq:chi}.

Throughout this section, the elements $\omega_j$ are viewed in $k$, so all the following calculations are in characteristic~$3$.
 
The section has two goals. First, assuming that the traces at $t=0$ and
$t=-1$ agree for certain triples, we show in
\Cref{lem:direct-preparation} that every $\omega_j$ and every
$1+\zeta_r^j$ is a square in the residue field; this information will be
used in the next section. We then treat $t=1$. Using the two identities in
\Cref{lem:two-products}, we prove in \Cref{prop:t1-direct} that simultaneous
equality of the traces at $t=1$ and $t=-1$ would force $-1$ to be a square,
giving a contradiction.

\begin{lemma}\label{lem:cyclotomic-squares}
For every $j\not\equiv0\pmod r$, the element $\omega_j-1$ is a
nonzero square in~$k$. Moreover,
\[
 \omega_j+1\in k^{\times2}
 \quad\Longleftrightarrow\quad
 3^f\equiv1\pmod r.
\]
If $f$ is even, then $3^f\equiv-1\pmod r$.
\end{lemma}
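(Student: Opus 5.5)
Work in $\overline{k}$ with $\zeta=\zeta_r^j$, a primitive $r$-th root of unity in $\mu_r$. Since $k$ is the residue field of $K$ at $\fq$, we have $\omega_j=\zeta+\zeta^{-1}\in k$ and $k(\zeta)$ is at most quadratic over $k$. The plan is to write each of $\omega_j-1$ and $\omega_j+1$ as a square times an explicit element, using the characteristic-$3$ identities $-1=2$, $(1+\zeta)^2$, and so on.

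For $\omega_j-1$, first observe that in characteristic $3$
\[
\omega_j-1=\zeta^{-1}(\zeta^2-\zeta+1)=\zeta^{-1}(\zeta^2+2\zeta+1)=\zeta^{-1}(\zeta+1)^2,
\]
and $\zeta+1\neq0$ because $\zeta\neq-1$ (as $r$ is odd). Since $r$ is odd, $\zeta^{-1}=(\zeta^{(r-1)/2})^2$ is a square in $k(\zeta)$. Hence
\[
\omega_j-1=\eta^2,\qquad \eta=\zeta^{(r-1)/2}(1+\zeta)=\zeta^{-1/2}(1+\zeta),
\]
where $\zeta^{-1/2}$ denotes the canonical square root $\zeta^{(r-1)/2}\in\mu_r$. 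The next step is to check that $\eta$ actually lies in $k$. If $\zeta\in k$ this is clear. Otherwise $3^f\equiv-1\pmod r$, and Frobenius $x\mapsto x^{3^f}$ sends $\zeta\mapsto\zeta^{-1}$ and $\zeta^{(r-1)/2}\mapsto\zeta^{-(r-1)/2}$, so $\eta\mapsto\zeta^{-(r-1)/2}(1+\zeta^{-1})=\zeta^{-(r+1)/2}(1+\zeta)$. We then need $\zeta^{-(r+1)/2}=\zeta^{(r-1)/2}$, which holds since the exponents differ by $r$. Thus $\eta\in k^\times$, and the first claim follows.

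For $\omega_j+1$, we have $\omega_j+1=\zeta^{-1}(\zeta^2+\zeta+1)=\zeta^{-1}(\zeta-1)^2$, since $\zeta^2+\zeta+1=(\zeta-1)^2$ in characteristic $3$. Set $\theta=\zeta^{(r-1)/2}(\zeta-1)$, so $\theta^2=\omega_j+1$ and $\theta\neq0$. The same Frobenius computation gives $\theta\mapsto\zeta^{-(r-1)/2}(\zeta^{-1}-1)=-\zeta^{(r-1)/2}(\zeta-1)=-\theta$ when $3^f\equiv-1\pmod r$. In that case the square roots $\pm\theta$ of $\omega_j+1$ lie outside $k$, so $\omega_j+1$ is a non-square. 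When $3^f\equiv1\pmod r$ we have $\zeta\in k$, hence $\theta\in k$ and $\omega_j+1$ is a square. Since $f$ is defined by $3^f\equiv\pm1$, exactly one of the two cases occurs (note $1\not\equiv-1$ as $r$ is odd), which gives the stated equivalence.

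For the final claim, suppose $f$ is even and $3^f\equiv1\pmod r$. Then $(3^{f/2})^2\equiv1$, so $3^{f/2}\equiv\pm1\pmod r$, contradicting the minimality of $f$. Hence $3^f\equiv-1\pmod r$. The main point requiring care is verifying that $\eta$ lies in $k$ and $\theta$ does not in the case $3^f\equiv-1$, that is, the exponent bookkeeping with $\zeta^{(r-1)/2}$. Everything else is direct computation.
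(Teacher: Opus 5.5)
Your proof is correct and is essentially the paper's argument. Your $\zeta^{(r-1)/2}$ is the inverse of the unique square root $w=\zeta^{(r+1)/2}$ of $\zeta$ in $\mu_r$ used there, so your $\eta$ and $\theta$ are exactly the paper's $w+w^{-1}$ and $w-w^{-1}$; the Frobenius analysis and the minimality argument for even $f$ also match. The only cosmetic difference is that you read off $\zeta^{3^f}=\zeta^{\pm1}$ directly from $3^f\equiv\pm1\pmod r$, whereas the paper obtains it via the roots of $X^2-uX+1$.
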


\begin{proof}
Fix $j\not\equiv0\pmod r$ and put $\zeta=\zeta_r^j$ and
$u=\omega_j=\zeta+\zeta^{-1}$. Since squaring is an automorphism of
$\mu_r$, there is a unique $w\in\mu_r$ satisfying $w^2=\zeta$.
Because we are in characteristic~$3$, we have
\[
u-1=(w+w^{-1})^2,\qquad
u+1=(w-w^{-1})^2.
\]
Thus $u-1$ is a nonzero square once $w+w^{-1}\in k$.

Since $u\in k$, the pairs $\{\zeta,\zeta^{-1}\}$ and
$\{\zeta^{3^f},\zeta^{-3^f}\}$ are both the roots of
$X^2-uX+1$. Hence $\zeta^{3^f}=\zeta$ or
$\zeta^{3^f}=\zeta^{-1}$, according as
$3^f\equiv1$ or $-1\pmod r$. As $w^{3^f}\in\mu_r$, the uniqueness
of the square root of~$\zeta$ in $\mu_r$ gives
\[
 w^{3^f}=w \quad\text{if }3^f\equiv1\pmod r,
 \qquad
 w^{3^f}=w^{-1} \quad\text{if }3^f\equiv-1\pmod r.
\]
In either case $w+w^{-1}$ is fixed by Frobenius and therefore belongs
to~$k$. On the other hand, $w-w^{-1}$ belongs to~$k$ precisely in the
first case. Since the only square roots of $u+1$ in $\overline{k}$ are
$\pm(w-w^{-1})$, this proves the first two assertions.

Finally, suppose that $f$ is even and $3^f\equiv1\pmod r$. Then
$(3^{f/2})^2\equiv1\pmod r$, so $3^{f/2}\equiv\pm1\pmod r$,
contradicting the minimality of~$f$. Therefore $3^f\equiv-1\pmod r$
when $f$ is even.
\end{proof}

\begin{lemma}\label{lem:h-relations}
For a triple $\bk=(k_1,k_2,k_3)$, write $h_t^\bk$ for the element
$h_t$  in~\eqref{eq:Et}. Then, in $k$,
\[
\begin{aligned}
h_0^\bk&=\omega_{k_1}+\omega_{k_2}+\omega_{k_3},\\
h_1^\bk&=-(\omega_{k_1}+\omega_{k_2}+\omega_{k_3})
-(\omega_{k_1}\omega_{k_2}+\omega_{k_1}\omega_{k_3}
+\omega_{k_2}\omega_{k_3}),\\
h_{-1}^\bk&=\omega_{k_1}\omega_{k_2}+\omega_{k_1}\omega_{k_3}
+\omega_{k_2}\omega_{k_3}
-(\omega_{k_1}+\omega_{k_2}+\omega_{k_3}).
\end{aligned}
\]
Moreover, no two of $h_0^\bk,h_1^\bk,h_{-1}^\bk$ vanish simultaneously.
\end{lemma}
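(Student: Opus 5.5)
The plan is to compute $h_t=B_t-A_t$ directly from the definitions, then reduce the vanishing claim to a statement about the elementary symmetric functions of $\omega_{k_1},\omega_{k_2},\omega_{k_3}$. Write $\omega_i=\omega_{k_i}$, $e_1=\omega_1+\omega_2+\omega_3$ and $e_2=\omega_1\omega_2+\omega_1\omega_3+\omega_2\omega_3$.

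For the formulas, substitute $t=0,1,-1$ into $h_t=\beta(1+\omega_2t+t^2)-\alpha(1+\omega_1t+t^2)$, using $\alpha=\omega_3-\omega_2$ and $\beta=\omega_1-\omega_3$, and reduce in characteristic~$3$.
\begin{itemize}
\item At $t=0$ we get $h_0=\beta-\alpha=\omega_1+\omega_2-2\omega_3=e_1$, since $-2=1$ in $k$.
\item At $t=\pm1$ we have $1\pm\omega t+t^2=2\pm\omega$, so
\[
h_{\pm1}=2(\beta-\alpha)\pm(\beta\omega_2-\alpha\omega_1).
\]
\item Next, $\beta\omega_2-\alpha\omega_1=2\omega_1\omega_2-\omega_2\omega_3-\omega_1\omega_3=-e_2$, again because $2=-1$.
\end{itemize}
Hence $h_1=-e_1-e_2$ and $h_{-1}=-e_1+e_2$, which are the stated formulas. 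This part is a routine check.

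For the non-vanishing claim, note the linear relations $h_1+h_{-1}=-2e_1=h_0$. So if any two of $h_0,h_1,h_{-1}$ vanish, the third does too, and then $e_1=e_2=0$ in $k$. In that case $\omega_1,\omega_2,\omega_3$ are the roots of
\[
X^3-e_1X^2+e_2X-e_3=X^3-e_3 .
\]
In characteristic~$3$ this polynomial equals $(X-c)^3$, where $c$ is the unique cube root of $e_3$ in $\overline{k}$. Therefore $\omega_1=\omega_2=\omega_3$ in $k$.

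The main point is to rule this out, i.e.\ to show that the $\omega_{k_i}$ stay distinct in the residue field. Suppose $\omega_{k_i}=\omega_{k_j}$ in $k$. Then $\zeta^{k_i}$ and $\zeta^{k_j}$ (images in $\overline{k}$) would both be roots of $X^2-\omega X+1$, whose roots are $\zeta^{k_j}$ and $\zeta^{-k_j}$. So $\zeta^{k_i}=\zeta^{\pm k_j}$. Since $r\neq3$, the polynomial $X^r-1$ is separable over $\F_3$, so $\zeta$ has exact order $r$ in $\overline{k}^\times$. This forces $k_i\equiv\pm k_j\pmod r$, contradicting \eqref{eq:triple-conditions}. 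Hence no two of the $h$'s vanish simultaneously.
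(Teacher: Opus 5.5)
Your proposal is correct and follows the paper's route. You expand $h_t=B_t-A_t$ in characteristic~$3$, then note that two vanishings force $e_1=e_2=0$, so the $\omega_{k_i}$ would all be roots of $X^3-c$, which has a single root in characteristic~$3$. Two of your steps the paper leaves implicit: the relation $h_0=h_1+h_{-1}$, and the check that the $\omega_{k_i}$ remain distinct modulo $\fq$ because the reduction of $\zeta_r$ has exact order $r$.
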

\begin{proof}
The displayed identities follow by substituting $t=0,1,-1$ in the
definitions of $A_t$ and~$B_t$ and expanding $h_t=B_t-A_t$ in
characteristic~$3$.

If two of $h_0,h_1,h_{-1}$ vanished, the displayed formulas would give
\[
\omega_{k_1}+\omega_{k_2}+\omega_{k_3}=0,\qquad
\omega_{k_1}\omega_{k_2}+\omega_{k_1}\omega_{k_3}
+\omega_{k_2}\omega_{k_3}=0.
\]
Therefore
\[
(X-\omega_{k_1})(X-\omega_{k_2})(X-\omega_{k_3})=X^3-c
\]
for some $c\in k$. In characteristic~$3$, $X^3-c$ has at most one
distinct root, contradicting the fact that
$\omega_{k_1},\omega_{k_2},\omega_{k_3}$ are distinct.
\end{proof}
We will also use the following notations
\[
 u=\omega_1,\qquad v=\omega_2=u^2+1,\qquad d=u^2-1 = (u-1)(u+1). 
\]
\begin{lemma}\label{lem:omega-nonvanishing}
In $k$ the elements $u,v,d$ are non-zero and $\omega_j \not\in \{0,\pm 1 \}$ for all $j\not\equiv0\pmod r$.
\end{lemma}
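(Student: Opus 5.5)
We work with $\zeta=\zeta_r^j\in\mu_r\subset\overline{k}$, $\zeta\neq1$, and argue entirely in characteristic~$3$. The plan has three short steps: first show $\omega_j\notin\{0,\pm1\}$ for every $j\not\equiv0\pmod r$, then read off the claims for $u$ and $v$, and finally treat $d$.

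For the first step, note that $\omega_j=c$ means $\zeta$ is a root of $X^2-cX+1$. We check the three values of $c$ in turn.

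If $\omega_j=0$, then $\zeta^2=-1$, so $\zeta^4=1$. Combined with $\zeta^r=1$ and $\gcd(4,r)=1$, this forces $\zeta=1$, a contradiction.

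If $\omega_j=1$, then $\zeta^2-\zeta+1=0$. In characteristic~$3$ this polynomial is $X^2+2X+1=(X+1)^2$, so $\zeta=-1$. Since $r$ is odd, $\zeta^r=-1\neq1$, a contradiction.

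If $\omega_j=-1$, then $\zeta^2+\zeta+1=(\zeta-1)^2=0$ in characteristic~$3$, so $\zeta=1$, a contradiction.

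Alternatively, the cases $\omega_j=\pm1$ follow from \Cref{lem:cyclotomic-squares}. That lemma shows $\omega_j-1$ is a nonzero square, so $\omega_j\neq1$. It also gives $\omega_j+1=(w-w^{-1})^2$ with $w\in\mu_r$ and $w^2=\zeta\neq1$, and $w\neq w^{-1}$, so $\omega_j\neq-1$.

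For the second step, $u=\omega_1\neq0$ is immediate from the first step. For $v$, the identity $\omega_2=\omega_1^2-2$ becomes $v=u^2+1$ in characteristic~$3$. Since $\omega_2$ is an $\omega_j$ with $j=2\not\equiv0\pmod r$, the first step gives $v\neq0$.

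For the third step, $d=(u-1)(u+1)$. Both factors are nonzero because $u=\omega_1\neq\pm1$, so $d\neq0$.

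No step is a real obstacle. The only point needing care is the case analysis for $\omega_j=\pm1$, where the characteristic-$3$ factorizations $X^2\mp X+1=(X\pm1)^2$ do the work.
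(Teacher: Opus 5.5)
Your proof is correct and follows essentially the same route as the paper: view $\omega_j$ as $\zeta+\zeta^{-1}$ with $\zeta$ the reduction of $\zeta_r^j$, and use $\zeta^2=-1$ together with the characteristic-$3$ factorizations $X^2\mp X+1=(X\pm1)^2$ to contradict that $\zeta$ is a nontrivial $r$-th root of unity, then read off $u,v,d\neq0$. The one point you leave implicit, which the paper states, is that this reduction is $\neq1$ (indeed of order $r$): reducing modulo a prime of $\Q(\zeta_r)$ above $\fq$ is injective on $\mu_r$ because $r\neq3$ makes $X^r-1$ separable modulo~$3$.
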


\begin{proof}
Let $\mathfrak Q$ be a prime of $\Q(\zeta_r)$ above $\fq$, let
$k_{\mathfrak Q}$ be its residue field, and let $\bar\zeta$ denote $\zeta_r$ modulo~$\mathfrak Q$. We view $k$ as a subfield of $k_{\mathfrak Q}$.
Since $r\neq3$, for all $j\not\equiv0\pmod r$, the element
$\bar\zeta^j$ has order $r$. If $\omega_j=0$ in $k$, then in
$k_{\mathfrak Q}$, we have
$\bar\zeta^j+\bar\zeta^{-j}=0$,
which gives $\bar\zeta^{2j}=-1$, so $\bar\zeta^j$ has order dividing
$4$. If $\omega_j=1$ or $\omega_j=-1$, then
\[
\bar\zeta^{2j}-\bar\zeta^j+1=(\bar\zeta^j+1)^2=0
\quad\text{or}\quad
\bar\zeta^{2j}+\bar\zeta^j+1=(\bar\zeta^j-1)^2=0,
\]
so $\bar\zeta^j=-1$ or $\bar\zeta^j=1$, respectively. Each case
contradicts $r\geq11$. Hence
$\omega_j\notin\{0,1,-1\}$ for every $j\not\equiv0\pmod r$.
In particular, $u=\omega_1$, $v=\omega_2$ and
$d=u^2-1$ are nonzero in $k$.
\end{proof}

For $j\not\equiv0\pmod r$, let $\mathbf f_j$ denote the triple obtained
in \Cref{lem:valid-triples}\textup{(ii)} from $(j,2j,4j)$. The following lemma separates the traces at $t=0$ from the trace at $t=-1$
whenever its conclusions do not hold.

\begin{lemma}\label{lem:direct-preparation}
Let $\fq\mid3$ be a prime of $K$ and put $k=\F_\fq$. Suppose that the traces at $t=0$ and
$t=-1$ are congruent modulo~$3$ for every triple $\mathbf f_j$, with
$j\not\equiv0\pmod r$, and for the triple $(2,6,10)$. Then
$\mu_r\subset k$, $f$ is odd, and every $1+\zeta_r^j$,
$j\not\equiv0\pmod r$, is a nonzero square in~$k$.
\end{lemma}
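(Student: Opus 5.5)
The plan is to turn each hypothesis into a statement about quadratic characters via \Cref{cor:hasse-quotient}, and then combine these statements multiplicatively. First, for any triple, \Cref{lem:h-relations} shows that $h_0$ and $h_{-1}$ cannot both vanish. If exactly one of them vanished, \Cref{cor:hasse-quotient} would make the traces incongruent. So the hypothesis forces $h_0,h_{-1}\neq0$ and $\chi(h_0h_{-1})=1$ for each triple $\mathbf f_j$ and for $(2,6,10)$.

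Second, I will compute $h_0$ and $h_{-1}$ explicitly for $\mathbf f_j$. By \Cref{rem:triple-symmetry} the order and signs of the indices do not matter, so I may use the indices $j,2j,4j$. Put $x=\omega_j$. In characteristic~$3$ we have $\omega_{2j}=x^2+1$ and $\omega_{4j}=x^4-x^2-1$. Using \Cref{lem:h-relations}, this gives
\[
h_0=x^4+x=x(x+1)^3,\qquad
h_{-1}=(x-1)(x^5-x^4+x^3+x^2-x+1).
\]
The next step is to rewrite the quintic factor as a product of the elements $\omega_i\pm1$, $\omega_i$, or other cyclotomic quantities whose quadratic characters are controlled. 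For this I will use $\omega_{3j}=x^3$ and the relations $\omega_i-1=(w+w^{-1})^2$ and $\omega_i+1=(w-w^{-1})^2$ from \Cref{lem:cyclotomic-squares}. By \Cref{lem:cyclotomic-squares}, $x-1$ is a nonzero square, and $\chi(x+1)$ is a global sign $s$, with $s=1$ exactly when $3^f\equiv1\pmod r$. The condition $\chi(h_0h_{-1})=1$ therefore becomes a relation of the form $\chi(\omega_j)\cdot s\cdot\chi(\text{quintic})=1$, valid for every $j$.

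Third, I will use the triple $(2,6,10)$, and the freedom to vary $j$ (replacing $j$ by $2j$, $3j$, or $(r+1)j/2$), to cancel the unknown factors. The target is to obtain $s=1$, which gives $\mu_r\subset k$ and hence, by \Cref{lem:cyclotomic-squares}, $f$ odd. The other target is $\chi(\omega_j)=1$ for all $j$. The final assertion then follows from
\[
1+\zeta_r^j=w(w+w^{-1})=w\,\omega_{(r+1)j/2},
\qquad w^2=\zeta_r^j,\ w\in\mu_r\subset k.
\]
Here $w$ is a square because $\mu_r$ has odd order, and $1+\zeta_r^j\neq0$ by \Cref{lem:omega-nonvanishing}.

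The main obstacle is the factorization of the quintic (and the analogous computation for $(2,6,10)$) into pieces with known characters. If no clean cyclotomic factorization exists, I would instead compare the relations for $j$ and $2j$: their product should telescope to $\chi(\omega_j\omega_{2j}\cdots)=1$. The triple $(2,6,10)$ would then supply the one extra relation, pinning $s=1$ and $\chi(\omega_1)=1$, from which $\chi(\omega_j)=1$ follows for all $j$ by induction along $j\mapsto 2j$.
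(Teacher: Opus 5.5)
Your outline is the paper's: turn each hypothesis into $\chi(h_0h_{-1})=1$ via \Cref{cor:hasse-quotient}, compute $h_0,h_{-1}$ for $\mathbf f_j$, use $(2,6,10)$ to force $\omega_1+1$ to be a square, and finish with $1+\zeta_r^j=w\,\omega_{(r+1)j/2}$. Your formulas for $h_0$ and $h_{-1}$ for $\mathbf f_j$ are correct, and so is your last step. But the proof stops where its content lies. You never determine $\chi$ of the quintic, and you never compute anything for $(2,6,10)$.

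The missing observation is that in $\F_3[x]$
\[
x^5-x^4+x^3+x^2-x+1=(x^3+1)(x^2-x+1)=(x+1)^5.
\]
Hence $h_{-1}^{\mathbf f_j}=(\omega_j-1)(\omega_j+1)^5$ and $h_0^{\mathbf f_j}h_{-1}^{\mathbf f_j}=\omega_j(\omega_j-1)(\omega_j+1)^8$. Since $\omega_j-1$ is a nonzero square, the hypothesis for $\mathbf f_j$ gives $\chi(\omega_j)=1$ for every $j$ at once. The sign $s$ cancels, so your relation ``$\chi(\omega_j)\cdot s\cdot\chi(\text{quintic})=1$'' contains no unknowns to eliminate by varying $j$.

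For $(2,6,10)$, put $u=\omega_1$, $v=\omega_2$, $d=u^2-1$. You must actually verify $h_0=u^2v^3d$ and $h_{-1}=uv^2\omega_7d^2$, for instance using $\omega_6=v^3$ and $\omega_{10}=v^5+v^3-v$. Then $\chi(h_0h_{-1})=\chi(u)\chi(v)\chi(\omega_7)\chi(d)=\chi(d)$. So $d=(u-1)(u+1)$ is a square, hence $u+1$ is a square and $s=1$. This is what pins down $\mu_r\subset k$ and $f$ odd.

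Your fallback does not close the gap. The ``telescoping'' of the relations for $j$ and $2j$ has no support unless you know the quintic. Also, induction along $j\mapsto2j$ from $j=1$ only reaches $j\in\pm\langle2\rangle\subseteq(\Z/r\Z)^\times$. This is a proper subgroup in general: for $r=17$ it has index~$2$. So it cannot give $\chi(\omega_j)=1$ for all $j$. With the two identities above inserted, your argument becomes the paper's proof.
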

\begin{proof}
Let $\chi$ be the quadratic character defined explicitly in~\eqref{eq:chi}. By
\Cref{lem:h-relations}, applied to $\mathbf f_j$, we have
\[
h_0^{\mathbf f_j}=\omega_j(\omega_j+1)^3,\qquad
h_{-1}^{\mathbf f_j}=(\omega_j-1)(\omega_j+1)^5.
\]
These elements are nonzero by \Cref{lem:omega-nonvanishing}. Hence
\Cref{cor:hasse-quotient}, together with the fact that $\omega_j-1$ is a
square by \Cref{lem:cyclotomic-squares}, gives $\chi(\omega_j)=1$ for every
$j\not\equiv0\pmod r$.

Applying \Cref{lem:h-relations} to $(2,6,10)$ gives
\[
h_0^{(2,6,10)}=u^2v^3d,\qquad
h_{-1}^{(2,6,10)}=uv^2\omega_7d^2.
\]
These elements are again nonzero. By the assumed trace congruence and
\Cref{cor:hasse-quotient}, their quotient is a square. Since
$u=\omega_1$, $v=\omega_2$ and $\omega_7$ are squares by the previous
paragraph, this gives $\chi(d)=1$. Thus $d=(u-1)(u+1)$ is a square.
Since $u-1$ is a square by \Cref{lem:cyclotomic-squares}, $u+1$ is also a
square. The same lemma gives $3^f\equiv1\pmod r$, hence $\mu_r\subset k$;
moreover, $f$ is odd.

The group $\mu_r(k)$ has odd order~$r$ and therefore consists of squares.
Since $\omega_j$ is a square and
$\omega_j=\zeta_r^{-j}(1+\zeta_r^{2j})$, the element
$1+\zeta_r^{2j}$ is a nonzero square. As multiplication by~$2$ permutes
the nonzero residues modulo~$r$, every $1+\zeta_r^j$ with
$j\not\equiv0\pmod r$ is a nonzero square in~$k$.
\end{proof}

We end this section with a result separating the traces at $t=1$ from
those at $t=-1$ in all cases. We need the following auxiliary lemma.
\begin{lemma}\label{lem:two-products}
For a triple~$\bk$, define
$P_\bk=h_1^\bk h_{-1}^\bk$.
Then
\[
d^5P_{(1,3,5)}=u^2P_{(2,4,8)}\quad \text{ and } \quad 
d^8P_{(2,6,10)}=-v^4P_{(1,3,5)}P_{(1,5,7)}.
\]
\end{lemma}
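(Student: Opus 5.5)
The plan is to reduce both identities to polynomial identities in $\F_3[u]$, and then check them by factoring. The point is that every $\omega_j$ is a polynomial in $u=\omega_1$ with integer coefficients, through the Chebyshev-type recursion $\omega_{j+1}=u\,\omega_j-\omega_{j-1}$ with $\omega_0=2$. This polynomial does not depend on $r$. So it is enough to prove the identities in $\F_3[u]$ and then specialise $u$ to $\omega_1\in k$. An identity in $\F_3[u]$ is the natural target, because no single relation on $u$ holds for all $r\geq 11$.

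\textbf{Step 1: express the relevant $\omega_j$ in $u$.} In characteristic $3$ the recursion gives
\[
\omega_2=u^2+1=v,\quad \omega_3=u^3,\quad \omega_4=v^2+1,\quad \omega_5=u^5+u^3-u,
\]
\[
\omega_6=u^6+1,\quad \omega_7=u^7-u^5-u^3-u,\quad \omega_8=\omega_4^2+1,\quad \omega_{10}=\omega_5^2+1.
\]
Here I use $\omega_{2j}=\omega_j^2-2=\omega_j^2+1$, and I will double-check each entry against the recursion.

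\textbf{Step 2: simplify $P_\bk$.} By \Cref{lem:h-relations}, write $e_1=\omega_{k_1}+\omega_{k_2}+\omega_{k_3}$ and $e_2=\omega_{k_1}\omega_{k_2}+\omega_{k_1}\omega_{k_3}+\omega_{k_2}\omega_{k_3}$. Then $h_1^\bk=-(e_1+e_2)$ and $h_{-1}^\bk=e_2-e_1$, so
\[
P_\bk=(e_1+e_2)(e_1-e_2)=e_1^2-e_2^2 .
\]
For each of the four triples $(1,3,5)$, $(2,4,8)$, $(2,6,10)$ and $(1,5,7)$ I will compute $e_1\pm e_2$ as explicit polynomials in $u$, using the substitutions from Step 1.

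\textbf{Step 3: factor and compare.} I will factor each factor $e_1\pm e_2$ over $\F_3$, expecting factors among $u$, $u\pm1$, $v=u^2+1$, $\omega_7$ and a few further small irreducibles. For the triple $(2,4,8)$ there is a shortcut: it is the triple $(1,2,4)$ evaluated at $\omega_2$. Hence its $e_1\pm e_2$ are the $(1,2,4)$-expressions with $u$ replaced by $v$, and since $v\pm1$ equals $u^2+2$ or $u^2$, they split back in terms of $u$ and $d=u^2-1$. This is why the powers $u^2$ and $d^5$ appear in the first identity. The same idea applies to $(2,6,10)$, which is the triple $(1,3,5)$ evaluated at $\omega_2$, using $\omega_{2j}=\omega_j^2+1$. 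It gives $P_{(2,6,10)}$ as $P_{(1,3,5)}$ with $u\mapsto v$. The second identity then becomes a factorisation statement: after substituting $u\mapsto u^2+1$, the factors of $P_{(1,3,5)}$ should split into the factors of $P_{(1,3,5)}$ and of $P_{(1,5,7)}$, together with powers of $d$ and $v$.

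\textbf{Main obstacle.} The main difficulty is bookkeeping in the second identity. The total degree in $u$ is about $40$, so I will organise the computation by factors rather than by expanding fully. Concretely, I will write $P_{(1,3,5)}(u)=\prod_i g_i(u)$ and compute each $g_i(u^2+1)$ separately. For each of these I will verify that it factors into the appropriate product of pieces of $P_{(1,3,5)}$, $P_{(1,5,7)}$, $v$ and $d$, and I will keep track of the leading coefficients to get the sign $-1$. If a piece does not match, I will fall back to comparing degrees and leading coefficients, and checking the identity at enough points of $\F_9$ and $\F_{27}$. These pieces have small degree, so this becomes a finite check.
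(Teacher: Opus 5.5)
Your plan is correct and is essentially the paper's proof. The paper also uses $\omega_0=2$, $\omega_1=u$ and the recursion in characteristic~$3$ to obtain the factorizations $P_{(1,3,5)}=-u^4d^3(u^6+u^4-1)$, $P_{(2,4,8)}=-u^2d^8(u^6+u^4-1)$, $P_{(2,6,10)}=-u^5v^4\omega_7d^3(u^6+u^4-1)$ and $P_{(1,5,7)}=-u\omega_7d^8$, and both identities follow at once. Your substitution $u\mapsto v$ is a valid shortcut that explains these formulas: it rests on $\omega_{2j}$ being $\omega_j$ with $\zeta_r$ replaced by $\zeta_r^2$, since the relation $\omega_{2j}=\omega_j^2+1$ alone composes in the wrong order, and the factorization you will need is $v^6+v^4-1=(u^6-u^4-u^2-1)(u^6+u^4-1)$, where $u(u^6-u^4-u^2-1)=\omega_7$.
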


\begin{proof}
Using $\omega_0=2$, $\omega_1=u$ and
\[
\omega_{j+1}=u\omega_j-\omega_{j-1},
\]
together with Lemma~\ref{lem:h-relations}, we obtain
\[
\begin{aligned}
P_{(1,3,5)}
    &=-u^4d^3(u^6+u^4-1),&
P_{(2,4,8)}
    &=-u^2d^8(u^6+u^4-1),\\
P_{(2,6,10)}
    &=-u^5v^4\omega_7d^3(u^6+u^4-1),&
P_{(1,5,7)}
    &=-u\omega_7d^8.
\end{aligned}
\]
The two identities follow immediately.
\end{proof}

\begin{proposition}\label{prop:t1-direct}
For every prime $\fq\mid3$, at least one of the four triples
\[
 (1,3,5),\qquad(2,4,8),\qquad(2,6,10),\qquad(1,5,7)
\]
(possibly depending on~$\fq$) satisfies
\[
 a_\fq(E_1^\bk)\not\equiv a_\fq(E_{-1}^\bk)\pmod3.
\]
\end{proposition}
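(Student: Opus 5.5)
We argue by contradiction: fix $\fq\mid3$, put $k=\F_\fq$, and suppose that for all four triples $\bk\in\{(1,3,5),(2,4,8),(2,6,10),(1,5,7)\}$ we have $a_\fq(E_1^\bk)\equiv a_\fq(E_{-1}^\bk)\pmod3$. The goal is to show that $-1$ must then be a square in $k$, and that this contradicts the arithmetic of $f$.

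\textbf{Step 1: translate the congruences into square conditions.} For each of the four triples, apply \Cref{cor:hasse-quotient} with $t=1$. By \Cref{lem:h-relations}, $h_1^\bk$ and $h_{-1}^\bk$ cannot both vanish. If exactly one vanished, the corollary would give incongruent traces, which contradicts our assumption. Hence both are nonzero, and $h_1^\bk/h_{-1}^\bk\in k^{\times2}$. Multiplying by $(h_{-1}^\bk)^2$ gives $\chi(P_\bk)=1$ for each of the four triples, where $P_\bk=h_1^\bk h_{-1}^\bk$. Nonvanishing of $P_\bk$ is also visible from the explicit formulas in the proof of \Cref{lem:two-products}, since $u,d,\omega_7\neq0$ by \Cref{lem:omega-nonvanishing}. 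The only remaining factor there is $u^6+u^4-1$, which must be nonzero because both $h$'s are.

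\textbf{Step 2: deduce that $-1$ is a square.} Apply $\chi$ to the identities of \Cref{lem:two-products}. The first identity, $d^5P_{(1,3,5)}=u^2P_{(2,4,8)}$, has even powers $u^2$ and $d^4$, so it gives $\chi(d)\chi(P_{(1,3,5)})=\chi(P_{(2,4,8)})$, hence $\chi(d)=1$. The second identity, $d^8P_{(2,6,10)}=-v^4P_{(1,3,5)}P_{(1,5,7)}$, has even powers $d^8$ and $v^4$, so it gives $\chi(P_{(2,6,10)})=\chi(-1)\chi(P_{(1,3,5)})\chi(P_{(1,5,7)})$. Since all three $\chi(P)$ values equal $1$, we get $\chi(-1)=1$.

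\textbf{Step 3: reach a contradiction.} In $\F_{3^f}$, $-1$ is a square exactly when $f$ is even, because $-1$ is a nonsquare in $\F_3$ and $\F_{3^f}$ contains $\F_9$ exactly when $f$ is even. So $f$ is even, and by \Cref{lem:cyclotomic-squares} this means $3^f\equiv-1\pmod r$, so $u+1$ is a nonsquare. On the other hand, $u-1$ is a square by that same lemma, and $d=(u-1)(u+1)$ is a square by Step 2. This forces $u+1$ to be a square, which is a contradiction.

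\textbf{Main obstacle.} The argument relies on the first identity of \Cref{lem:two-products} to obtain $\chi(d)=1$; without it, Step 2 alone only yields $\chi(-1)=1$, which is consistent whenever $f$ is even. The delicate points are therefore checking that no $P_\bk$ vanishes, which is handled by the "exactly one vanishes" clause of \Cref{cor:hasse-quotient}, and correctly combining the parity of $f$ with \Cref{lem:cyclotomic-squares}. The polynomial identities themselves are taken as given from \Cref{lem:two-products}.
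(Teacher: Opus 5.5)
Your proof is correct and follows the paper's argument. Both translate the four congruences into $\chi(P_\bk)=1$, extract $\chi(d)=1$ and $\chi(-1)=1$ from the two identities of \Cref{lem:two-products}, and reach a contradiction via \Cref{lem:cyclotomic-squares}; the only difference is cosmetic, since the paper uses $\chi(d)=1$ to force $f$ odd and contradict $\chi(-1)=1$, while you use $\chi(-1)=1$ to force $f$ even and contradict $\chi(d)=1$.
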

\begin{proof}
Fix $\fq\mid3$, put $k=\F_\fq$, and let $\chi$ be its quadratic character.
Suppose that
$a_\fq(E_1^\bk)\equiv a_\fq(E_{-1}^\bk)\pmod3$
for the four triples~$\bk$ in the statement.
By \Cref{lem:h-relations} and \Cref{cor:hasse-quotient}, each of the four elements
\[
P_{(1,3,5)},\quad P_{(2,4,8)},\quad
P_{(2,6,10)},\quad P_{(1,5,7)}
\]
is a nonzero square in $k$.
Applying $\chi$ to the first 
identity in \Cref{lem:two-products}, we obtain
$\chi(d)=1$,
since $u^2$ and both $P_{(1,3,5)}$ and $P_{(2,4,8)}$ are squares.
Thus $d=(u-1)(u+1)$ is a square. Since $u-1$ is also a square by
\Cref{lem:cyclotomic-squares}, it follows that $u+1$ is a square.
The same lemma then gives that
$ 
3^f\equiv1\pmod r,
$
and $f$ is odd.

Now apply~$\chi$ to the second identity
in~\Cref{lem:two-products}. Since $d^8$, $v^4$ and all three
$P_\bk$ occurring there are squares, we obtain
$\chi(-1)=1$. On the other hand, $f$ is odd, so
\[
|k|=3^f\equiv3\pmod4,
\]
and therefore $-1$ is not a square in $k$, a contradiction.
\end{proof}

\section{Separating traces at $t=0$ from $t=-1$} \label{sec:t=0}

Recall that $r \geq 11$ is a prime, 
$\fq\mid3$ is a prime in $K$ and $k=\F_\fq \simeq \F_{3^f}$. Let $\chi$ be the
quadratic character defined in~\eqref{eq:chi}. It remains to separate the
traces at $t=0$ from those at $t=-1$.

The argument in this section treats four triples simultaneously. Using the
square information obtained in the previous section, we construct in
\Cref{lem:quartet} four triples, depending on $r$ and~$\fq$, with the relevant
properties. The models in \Cref{lem:t0-models} and the character-sum identity
in \Cref{lem:four-point-count} then show that if none of these triples
separates the traces at $t=0$ and $t=-1$ modulo~$48$, the sum of the four
corresponding trace differences is congruent to~$8$ modulo~$16$. This gives
the contradiction used in the proof of \Cref{thm:trace-separation}.

For a subset $A\subset k^\times$ with three elements, set
\[
 G_A(W)=\prod_{a\in A}(W-a),
\]
consider the smooth projective models of
\[
 C_A^0:Y^2=-G_A(W),\quad \text{ and } \quad C_A^{-1}:Y^2=WG_A(W),
\]
and define
\[
 T(A): =a_k(C_A^{-1})-a_k(C_A^0).
\]
Let $a_1,a_2,a_3,a_4\in k$ be four distinct elements and, for $1\leq j \leq 4$, define 
\[
 p_j=\prod_{i\ne j}\chi(a_j-a_i).
\]
\begin{lemma}\label{lem:t0-models}
Let $\bk$ be a triple and 
$A_\bk=\{\omega_{k_1}+1,\omega_{k_2}+1,\omega_{k_3}+1\}$.
Then $E_0^\bk$ and $E_{-1}^\bk$ are isomorphic over~$k$ to
$C_{A_\bk}^0$ and $C_{A_\bk}^{-1}$, respectively.
\end{lemma}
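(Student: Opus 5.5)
The plan is to write down explicit changes of variables over $k$, computing in characteristic~$3$ throughout. The one preliminary remark needed is that $2=-1$ in $k$, so
\[
f_{k_i}(1,-1)=2-\omega_{k_i}=-(\omega_{k_i}+1).
\]
The case $t=-1$ will therefore naturally produce the elements $a_i:=\omega_{k_i}+1$, which make up $A_\bk$. Recall that $E^\bk_t$ is $Y^2=X(X-A_t)(X+B_t)$, and that its $2$-torsion abscissae are $0$, $A_t$ and $-B_t$.

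For $t=0$ we have $A_0=\alpha=\omega_{k_3}-\omega_{k_2}$ and $-B_0=-\beta=\omega_{k_3}-\omega_{k_1}$. The three roots of the cubic are therefore exactly $e_i=\omega_{k_3}-\omega_{k_i}$ for $i=1,2,3$, with $e_3=0$. Substitute $X=\omega_{k_3}+1-W$. Then $X-e_i=-(W-a_i)$, so the right-hand side becomes $-G_{A_\bk}(W)$. Since this substitution is an affine change of $X$ with $Y$ unchanged, it gives $E_0^\bk\simeq C^0_{A_\bk}$ directly.

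For $t=-1$ the roots are
\[
e_1=0,\qquad e_2=A_{-1}=-\alpha a_1,\qquad e_3=-B_{-1}=\beta a_2,
\]
and the target has branch points $0,a_1,a_2,a_3$. I would use a Möbius substitution $X=p+q/W$, which sends $W=0$ to $X=\infty$, together with $Y\mapsto Y'/W^2$. Under it,
\[
X-e_i=\frac{(p-e_i)W+q}{W},
\]
so the curve becomes
\[
Y'^2=W\prod_i\bigl((p-e_i)W+q\bigr)=\Bigl(\prod_i(p-e_i)\Bigr)\,W\prod_i(W-a_{\sigma(i)}),
\]
provided that $-q/(p-e_i)=a_{\sigma(i)}$ for a suitable bijection $\sigma$. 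This amounts to solving $e_i=p+q/a_{\sigma(i)}$ for $i=1,2,3$: three equations in the two unknowns $p,q$. I would guess the matching from the shape of the roots, trying $e_1=0\leftrightarrow a_3$ first and the remaining roots in the natural order. The system then reduces to a linear identity among the $\omega_{k_i}$ that can be checked by hand using $\alpha+\beta+\gamma=0$ and characteristic~$3$. I expect it to close up because $A_{-1}+B_{-1}+C_{-1}=0$ gives exactly one linear relation, which is what makes the overdetermined system consistent.

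The main obstacle is the twist. The leading constant is $\prod_i(p-e_i)=-q^3/(a_1a_2a_3)$, and it must be a square in $k$ for the model to be $C^{-1}_{A_\bk}$ rather than a quadratic twist of it. That holds precisely when $-q\,a_1a_2a_3\in k^{\times2}$. I would compute $q$ explicitly from the linear system; I expect something like $q=\pm\gamma a_1a_2a_3/\text{(a product of differences)}$. I would then try to show that $-q\,a_1a_2a_3$ is a product of squares times elements already known to be squares, namely the $\omega_j-1$ and the differences $\omega_i-\omega_j=(w_i-w_j)(\cdots)$, which Lemma~\ref{lem:cyclotomic-squares} shows are squares. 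If this direct route is messy, I would try instead to absorb the constant by rescaling $W\mapsto\lambda W$, which multiplies the constant by $\lambda^4$, a square, and so cannot help. Failing that, I would choose a different bijection $\sigma$, or compose with the automorphism $W\mapsto a_ia_j/W$-type maps that permute $\{0,a_1,a_2,a_3\}$, to find a matching for which the constant is visibly a square.
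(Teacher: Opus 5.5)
Your $t=0$ case is correct and is the paper's substitution. For $t=-1$, your framework is also the paper's: the substitution $X=p+q/W$, $Y=Y'/W^2$, together with the criterion that $-q\,a_1a_2a_3$ must be a square. However, you never actually produce $\sigma$, $p$ and $q$, and what you offer in their place would not go through.

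\textbf{The matching.} Suppose ``natural order'' means $A_{-1}=-\alpha a_1\leftrightarrow a_1$ and $-B_{-1}=\beta a_2\leftrightarrow a_2$. Then the system is inconsistent in general. Eliminating $p,q$ gives $\alpha^2a_1^2=\beta^2a_2^2$. The sign $\alpha a_1=-\beta a_2$ forces $a_3(a_1-a_2)=0$, and the sign $\alpha a_1=\beta a_2$ is exactly $h^\bk_{-1}=0$. Your appeal to $A_{-1}+B_{-1}+C_{-1}=0$ does not explain consistency. Consistency is a cross-ratio condition, and only the correct matching satisfies it.

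\textbf{The twist.} Your fallback here is unsound. Lemma~\ref{lem:cyclotomic-squares} says nothing about the differences $\omega_i-\omega_j$; several of them are non-squares in the proof of Lemma~\ref{lem:quartet}. Moreover, this lemma has no square-class hypotheses on $\fq$, so an argument relying on such information could not prove it as stated.

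\textbf{The fix.} The missing step is short. Match $0\mapsto a_3$, $A_{-1}\mapsto a_2$ and $-B_{-1}\mapsto a_1$. Since $\alpha=a_3-a_2$ and $\beta=a_1-a_3$, the equations $0=p+q/a_3$ and $-\alpha a_1=p+q/a_2$ give $q=-a_1a_2a_3$ and $p=a_1a_2$. The third equation then holds identically, because $p+q/a_1=a_2(a_1-a_3)=\beta a_2$. Concretely,
\[
X=a_1a_2\frac{W-a_3}{W},\qquad X-A_{-1}=a_1a_3\frac{W-a_2}{W},\qquad X+B_{-1}=a_2a_3\frac{W-a_1}{W}.
\]
So the leading constant is $(a_1a_2a_3)^2$; in your own criterion, $-q\,a_1a_2a_3=(a_1a_2a_3)^2$. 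There is no twist at all. The substitution $Y=-a_1a_2a_3Y'/W^2$ then yields $Y'^2=W\,G_{A_\bk}(W)$. This is precisely the paper's change of variables $X=(\omega_{k_1}+1)(\omega_{k_2}+1)(V-\omega_{k_3})/(V+1)$ with $W=V+1$. It is invertible because the $a_i$ are nonzero by Lemma~\ref{lem:omega-nonvanishing} and distinct by~\eqref{eq:triple-conditions}.
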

\begin{proof}
Set $g(V)=\prod_{i=1}^3(V-\omega_{k_i})$. 

For $t=0$, the substitution
$X=\omega_{k_3}-V$ in~\eqref{eq:Et} gives
$E_0^\bk:Y^2=-g(V)$.

For $t=-1$, the change of variables
\[
 X=(\omega_{k_1}+1)(\omega_{k_2}+1)
   \frac{V-\omega_{k_3}}{V+1},\qquad
 Y_E=-\prod_{i=1}^3(\omega_{k_i}+1)\frac{Y}{(V+1)^2}
\]
identifies $E_{-1}^\bk$ with the smooth projective model of
$Y^2=(V+1)g(V)$. The factors $\omega_{k_i}+1$ are nonzero by
\Cref{lem:omega-nonvanishing}, while the differences
$\omega_{k_i}-\omega_{k_j}$ are nonzero by~\eqref{eq:triple-conditions}.

Finally, in both cases setting $W=V+1$ gives the result.
\end{proof}

Note that the next lemma is for arbitrary finite fields not only $k=\F_\fq$.

\begin{lemma}\label{lem:four-point-count}
Let $k$ be a finite field such that
$|k|\equiv3\pmod4$, and 
let $a_1,a_2,a_3,a_4 \in k$ be distinct
nonzero squares. Then
\begin{equation}\label{eq:four-point-count}
 \sum_{|A|=3}T(A)\equiv-2\sum_{j=1}^4p_j\pmod{16},
\end{equation}
where $A$ runs over all the subsets
$\{a_1,a_2,a_3,a_4\}$ with three elements.
\end{lemma}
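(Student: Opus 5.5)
The plan is to write each $T(A)$ as an explicit character sum over $k$, sum over the four subsets $A$, and then reduce the resulting sum of elementary symmetric functions of $\pm1$'s modulo~$8$. The relevant counts are the classical cyclotomic numbers of order~$2$ for $q=|k|\equiv3\pmod4$. Throughout, $q=|k|=4m+3$, $S=k^{\times2}$, and we use $\chi(-1)=-1$.

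\emph{Step 1: point counts.} The curve $C_A^0$ is a cubic model with one point at infinity, so
\[
a_k(C_A^0)=-\sum_{w}\chi(-G_A(w))=\sum_w\chi(G_A(w)).
\]
The curve $C_A^{-1}$ is a quartic with leading coefficient $1$, a square, so its smooth model has two points at infinity and $a_k(C_A^{-1})=-1-\sum_w\chi(wG_A(w))$. Hence
\[
T(A)=-1-\sum_w\chi(G_A(w))\bigl(1+\chi(w)\bigr).
\]
The weight $1+\chi(w)$ equals $2$ on $S$, $0$ on nonsquares, and $1$ at $w=0$. At $w=0$ the term is $\chi(-\prod_{a\in A}a)=-1$, because the $a_i$ are squares. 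This gives the clean formula
\[
T(A)=-2\sum_{w\in S}\chi(G_A(w)).
\]

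\emph{Step 2: sum over $A$.} Fix $w$ and put $\varepsilon_i=\chi(w-a_i)$. Then $\sum_{|A|=3}\chi(G_A(w))=e_3(\varepsilon_1,\dots,\varepsilon_4)$, the third elementary symmetric function.
\begin{itemize}
\item For $w=a_j$ (which lies in $S$), $\varepsilon_j=0$, so $e_3=\prod_{i\neq j}\varepsilon_i=p_j$.
\item For $w\in S\setminus\{a_i\}$, all $\varepsilon_i=\pm1$. Writing $\varepsilon_i=1-2\delta_i$ and expanding, $e_3\equiv 4-6D(w)\pmod 8$, where $D(w)=\#\{i:\varepsilon_i=-1\}$.
\end{itemize}
Therefore
\[
\sum_A T(A)=-2\Bigl(\sum_j p_j+\sum_{w\in S\setminus\{a_i\}}e_3\Bigr),
\]
and it suffices to show that $\Sigma:=\sum_{w\in S\setminus\{a_i\}}e_3\equiv 0\pmod 8$.

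\emph{Step 3: counting, the main obstacle.} We have $\Sigma\equiv 4N-6\sum_w D(w)\pmod8$, with $N=|S|-4=2m-3$.

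To compute $\sum_w D(w)$, fix $i$. Scaling by the square $a_i$ shows that
\[
\#\{w\in S:\ w-a_i\notin S\cup\{0\}\}=\#\{x\in S:\ x-1 \text{ nonsquare}\}.
\]
Substituting $y=-x$ turns this into the cyclotomic number $(1,0)=m$ for $q\equiv3\pmod 4$ (this should be checked directly by a short Jacobi-sum computation). From these counts we must remove the points $w=a_j$ with $j\neq i$ and $a_j-a_i$ a nonsquare. Since $\chi(-1)=-1$, exactly one of $a_j-a_i$ and $a_i-a_j$ is a nonsquare, so exactly $6$ ordered pairs are removed. Hence $\sum_w D(w)=4m-6$.

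Then
\[
\Sigma\equiv4(2m-3)-6(4m-6)=-16m+24\equiv0\pmod 8,
\]
which yields \eqref{eq:four-point-count}.

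The only delicate points are the correct normalization of the two point counts (points at infinity and the $w=0$ term) and the value of the cyclotomic number; the remainder is bookkeeping.
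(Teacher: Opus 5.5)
Your proof is correct and follows the paper's argument: you derive the same formula $T(A)=-2\sum_{w\in k^{\times2}}\chi(G_A(w))$, reduce the sum over $A$ to $e_3$ with value $p_j$ at $w=a_j$, and run a mod~$8$ analysis at the remaining squares. The only difference is in the final bookkeeping. The paper observes that $e_1+e_3\equiv0\pmod 8$ at those points and removes $\sum e_1$ using $\sum_{w\in k^{\times2}}\chi(w-a)=0$, which comes from the involution $w\mapsto a^2/w$. You instead write $e_3\equiv4-6D(w)$ and compute $\sum D(w)$ from the cyclotomic number $(1,0)=m$. These carry the same information, and the paper's involution identity gives your count without any Jacobi-sum computation.
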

\begin{proof}
Since $\chi(-1)=-1$, for every nonzero $a \in k^{\times2}$ the involution
$w\mapsto a^2/w$ of $k^{\times2}$ changes $\chi(w-a)$ to its negative.
Its only fixed point in $k^{\times2}$ is $w=a$, which contributes $\chi(w-a)=0$.
Hence
\begin{equation}\label{eq:singleton-sum}
 \sum_{w\in k^{\times2}}\chi(w-a)=0.
\end{equation}

The cubic $C_A^0$ has one rational point at infinity and the monic
quartic $C_A^{-1}$ has two, therefore
\[
 T(A)=-1-\sum_{w\in k}(1+\chi(w))\chi(G_A(w)).
\]
Since $\chi(G_A(0))=-1$, the term at zero cancels the initial $-1$, and
we obtain
\begin{equation}\label{eq:trace-square-sum}
 T(A)=-2\sum_{w\in k^{\times2}}
 \prod_{a\in A}\chi(w-a).
\end{equation}

For $w\in k^{\times2}$, put
$v_i(w)=\chi(w-a_i)
$
and
\[
H(w)=\sum_{i=1}^4v_i(w)
+\sum_{|I|=3}\prod_{i\in I}v_i(w),
\]
where $I$ runs over the three element subsets of $\{1,2,3,4\}$.
If $w\notin\{a_1,a_2,a_3,a_4\}$, then all $v_i(w) = \pm 1$ and
\[
2H(w)=\prod_{i=1}^4(1+v_i(w))
-\prod_{i=1}^4(1-v_i(w))\in16\Z.
\]

At $w=a_j$, we have $v_j(a_j)=0$. Hence, in the formula for $H(w)$ every triple product  containing
$j$ vanishes, while the unique triple product omitting $j$ is
\[
\prod_{i\ne j}\chi(a_j-a_i)=p_j.
\]
Therefore
\[
H(a_j)=\sum_{i\ne j}\chi(a_j-a_i)+p_j.
\]
Summing over $j$, the linear terms cancel in pairs since
$\chi(a_j-a_i)=-\chi(a_i-a_j)$, and thus
\[
\sum_{j=1}^4H(a_j)=\sum_{j=1}^4p_j.
\]

By~\eqref{eq:singleton-sum}, the linear terms in $H(w)$ sum to zero over
$k^{\times2}$. Therefore, summing~\eqref{eq:trace-square-sum} over the
four three element subsets $A$ gives
\[
\sum_{|A|=3}T(A)
=-2\sum_{w\in k^{\times2}}H(w)
\equiv-2\sum_{j=1}^4p_j\pmod{16},
\]
as required.
\end{proof}

\begin{lemma}\label{lem:quartet}
Suppose that $\mu_r\subset k$, that $f$ is odd, and that every
$1+\zeta_r^j$, $j\not\equiv0\pmod r$, is a nonzero square in~$k$. Then there exist
integers $1\leq x_1<x_2<x_3<x_4\leq r-1$
such that 
\begin{enumerate}
    \item every subset of $\{x_1,x_2,x_3,x_4\}$ with three elements satisfies~\eqref{eq:triple-conditions};
    \item the four elements $a_i=\omega_{x_i}+1 \in k \setminus \{0\}$ are distinct squares and $p_1=p_2=p_3=p_4=-1$.
\end{enumerate}
\end{lemma}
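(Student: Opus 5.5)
The plan is to build the quartet from the family of Lemma~\ref{lem:valid-triples}\textup{(iii)}, with $x_1,x_2,x_3,x_4=b-4,b-2,b,b+2$ for some $5\le b\le n$. This choice gives condition~(1) immediately. I will also allow a Galois twist, replacing $\zeta_r$ by $\zeta_r^c$, which changes the indices to $cx_i$ reduced and normalised as in Remark~\ref{rem:triple-symmetry}; this twist is compatible with the hypotheses because they are symmetric in $j$. For every such choice, each $a_i=\omega_{x_i}+1$ is a nonzero square by the hypotheses, because $\omega_x+1=\zeta_r^{-x}(1+\zeta_r^{x})^{2}$ in characteristic~$3$ and $\mu_r$ consists of squares. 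The $a_i$ are distinct because the indices are distinct modulo sign. So the whole content of the lemma is the sign condition $p_j=-1$.

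\emph{Step 1 (reformulation).} Since $f$ is odd, $\chi(-1)=-1$, so $\chi(a_j-a_i)=-\chi(a_i-a_j)$. The rule ``$i\to j$ when $\chi(a_j-a_i)=1$'' therefore defines a tournament on four vertices. In this tournament $p_j=(-1)^{\#\{i:\chi(a_j-a_i)=-1\}}$. A four-vertex tournament in which every vertex has an odd number of losses must have loss sequence $(1,1,1,3)$. So the requirement is a $3$-cycle together with a fourth vertex that loses to all three (or, after swapping the roles of the two directions, wins against all three). Note that $\prod_j p_j=\chi(-1)^6=1$ holds automatically, so this is consistent.

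\emph{Step 2 (compute the signs).} The factorisation
\[
\omega_x-\omega_y=(\zeta^x-\zeta^y)(1-\zeta^{-x-y})
\]
gives $\chi(a_j-a_i)=\psi(x_j-x_i)\,\psi(x_j+x_i)$, where $\psi(m):=\chi(1-\zeta_r^m)$. The function $\psi$ has two useful properties. First, $\psi(-m)=-\psi(m)$, because $1-\zeta^{-m}=-\zeta^{-m}(1-\zeta^m)$. Second, $\psi(2m)=\psi(m)$, because $1-\zeta^{2m}=(1-\zeta^m)(1+\zeta^m)$ and the second factor is a square. Hence $\psi$ is an odd $\pm1$-valued function on $(\Z/r)^\times/\langle 2\rangle$. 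For the chosen quartet the differences are $\pm2,\pm4,\pm6$, which collapse to $\psi(1)$ and $\psi(3)$. The sums are $2b-6,2b-4,2b-2,2b,2b+2$, which collapse to $\psi(b-3),\psi(b-2),\psi(b-1),\psi(b),\psi(b+1)$. Each $p_j$ thus becomes an explicit monomial in about seven values of $\psi$, and the $p_j=-1$ conditions become parity equations in these values.

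\emph{Step 3 (choosing $b$ and the twist $c$; main obstacle).} I expect this to be the hard step: showing that some admissible $b$, possibly after a twist by $c$, solves the parity system for an arbitrary odd $\psi$ that is invariant under doubling. I would first try to eliminate the dependence on $b$ by comparing the conditions for consecutive values $b$ and $b+1$. This should produce a recursion forcing $\psi$ to be constant along long runs if no $b$ works. I would then combine that recursion with oddness, $\psi(r-m)=-\psi(m)$, to reach a contradiction, using the fact that $b$ ranges up to $n=(r-1)/2$, so the sums $2b\pm\cdot$ wrap through residues near $r$. If the sliding-window argument leaves residual sign patterns, I would use the twist $c$ to move them: for instance $c=3$ swaps the roles of $\psi(1)$ and $\psi(3)$. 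For the smallest primes, beginning with $r=11$ and $13$, where $n$ is small, I would check the finitely many possible $\psi$ directly.
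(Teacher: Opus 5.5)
\textbf{There is a genuine gap: your Step 3, which you yourself identify as the whole content of the lemma, is never carried out.} Your Steps 1 and 2 correctly reduce the lemma to a parity system in the values $\psi(m)=\chi(1-\zeta_r^m)$. But the sliding-window recursion, the twist by $c$ and the direct check for small $r$ are only things you propose to try. Nothing in the proposal shows that an admissible $b$ exists.

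The missing idea is a third symmetry of $\psi$. In characteristic $3$ one has $1-\zeta_r^{3m}=(1-\zeta_r^m)^3$, hence $\psi(3m)=\psi(m)$. By aiming at an \emph{arbitrary} odd, doubling-invariant $\psi$, you are trying to prove something harder than the lemma needs. With tripling invariance, $\psi(6)=\psi(3)=\psi(1)$, so all six differences $\pm2,\pm4,\pm6$ contribute the same value $\psi(1)$, not two independent values.

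With this relation, an extremal choice of $b$ finishes the proof at once, with no twist and no case analysis.
\begin{itemize}
\item Since $2n\equiv-1\pmod r$, oddness and doubling give $\psi(n)=\psi(2n)=\psi(-1)=-\psi(1)$. So there is a least positive $b\le n$ with $\psi(b)=-\psi(1)$.
\item Doubling and tripling give $\psi(2)=\psi(3)=\psi(4)=\psi(1)$, so $b\ge5$.
\item $b$ is odd, since otherwise $\psi(b)=\psi(b/2)$ would contradict minimality.
\item Minimality then gives $\psi(b-3)=\psi(b-2)=\psi(b-1)=\psi(1)$, and also $\psi(b+1)=\psi((b+1)/2)=\psi(1)$.
\end{itemize}
Take $x_1,x_2,x_3,x_4=b-4,b-2,b,b+2$. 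Your Step 2 formula then gives $\chi(a_j-a_i)=1$ for all $i<j$ except $(i,j)=(2,4)$, where it equals $-1$. So $a_1$ has three losses and each of $a_2,a_3,a_4$ has exactly one, which is your $(3,1,1,1)$ pattern. Hence $p_1=p_2=p_3=p_4=-1$.

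Note that without the tripling relation this argument fails exactly when $\psi(3)=-\psi(1)$: the least such $b$ is then $3$, which is not admissible. That is why the characteristic-$3$ Frobenius relation is the essential input.

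Separately, your identity $\omega_x+1=\zeta_r^{-x}(1+\zeta_r^{x})^2$ is off by a sign. In characteristic $3$ its right-hand side equals $\omega_x+2=\omega_x-1$. The correct identity is $\omega_x+1=\zeta_r^{-x}(1-\zeta_r^{x})^2$, which gives the same conclusion using only $\mu_r\subset k$.
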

\begin{proof}
Put $\zeta=\zeta_r\in k$ and define
\[
 \beta_j=\frac{\chi(1-\zeta^j)}{\chi(1-\zeta)}.
\]
Since $f$ is odd, $\chi(-1)=-1$. Moreover, every element of $\mu_r$ and
every $1+\zeta^j$ is a square. Hence
\begin{equation}\label{eq:beta-relations}
 \beta_1=1,\qquad
 \beta_{-j}=-\beta_j,\qquad
 \beta_{2j}=\beta_{3j}=\beta_j,
\end{equation}
because
$1-\zeta^{-j}=-\zeta^{-j}(1-\zeta^j)$,
$1-\zeta^{2j}=(1-\zeta^j)(1+\zeta^j)$, and, in characteristic~$3$,
$1-\zeta^{3j}=(1-\zeta^j)^3$.

Recall that $n=(r-1)/2$. Since $2n\equiv-1\pmod r$, the relations 
\eqref{eq:beta-relations} give
$\beta_n=\beta_{2n}=\beta_{-1}=-1$. Let $b\leq n$ be the least positive
integer such that $\beta_b=-1$. We have $b\geq5$ because
\[
 \beta_1=\beta_2=\beta_3=\beta_4=\beta_6=1.
\]
Also, if $2 \mid b$, 
then $\beta_b=\beta_{b/2}$ contradicts
its minimality. Thus $b$ is odd.
We define
\[
 x_1=b-4,\qquad x_2=b-2,\qquad x_3=b,\qquad x_4=b+2.
\]
By \Cref{lem:valid-triples}, every three-element subset satisfies
\eqref{eq:triple-conditions}, proving part (1). 

Since $\mu_r\subset k$, we have
$3^f\equiv1\pmod r$. Thus \Cref{lem:cyclotomic-squares,lem:omega-nonvanishing}
show that $a_i=\omega_{x_i}+1$ are nonzero squares in~$k$, and
they are distinct by~\eqref{eq:triple-conditions}. We are left to show that $p_j=-1$. 

For the pairs
\[
 (i,j)=(1,2),(1,3),(1,4),(2,3),(2,4),(3,4),
\]
the differences $x_j-x_i$ are respectively $2,4,6,2,4,2$, and hence have
$\beta_{x_j-x_i}=1$. The $x_i$ have the same parity, so $x_i+x_j$ is even, and
the corresponding integers $(x_i+x_j)/2$ are
\[
 b-3,\quad b-2,\quad b-1,\quad b-1,\quad b,\quad b+1.
\]
By the minimality of~$b$, we have $\beta_{b-3}=\beta_{b-2}=\beta_{b-1}=1$. Since $b$ is odd,
$\beta_{b+1}=\beta_{(b+1)/2}=1$ again by minimality of~$b$.
For $i<j$ we have
\[
 \omega_{x_i}-\omega_{x_j}
 =\zeta^{-x_i}(1-\zeta^{x_i+x_j})(1-\zeta^{x_i-x_j}).
\]
The factor $\zeta^{-x_i}$ is a square, therefore
\[
 \chi(\omega_{x_i}-\omega_{x_j})
 =\beta_{x_i+x_j}\beta_{x_i-x_j}
 =\beta_{(x_i+x_j)/2}\beta_{x_i-x_j},
\]
where the second equality follows from~\eqref{eq:beta-relations}. Since
$\beta_{x_j-x_i}=1$ and $\beta_{-j}=-\beta_j$, we have
$\beta_{x_i-x_j}=-1$ for all six pairs. Thus the values
$\chi(\omega_{x_i}-\omega_{x_j})$ corresponding to the pairs in the order above are $-1,-1,-1,-1,1,-1$.

As $a_i-a_j=\omega_{x_i}-\omega_{x_j}$ and
$\chi(a_j-a_i)=-\chi(a_i-a_j)$, it follows that
\[
\begin{aligned}
 p_1&=(-1)(-1)(-1)=-1,&
 p_2&=(1)(-1)(1)=-1,\\
 p_3&=(1)(1)(-1)=-1,&
 p_4&=(1)(-1)(1)=-1.
\end{aligned}
\]
\end{proof}

\begin{theorem}\label{thm:trace-separation}
Let $r\geq11$ be prime, and let $\cT_r$ be the finite set of triples
satisfying~\eqref{eq:triple-conditions}. Then, for every
$[v:w]\in\PP^1(\F_3)$ different from $[1:-1]$ and every prime $\fq\mid3$
of $K$, there exists $\bk\in\cT_r$ such that
\[
 a_\fq(E_{v,w}^\bk)\not\equiv a_\fq(E_{1,-1}^\bk)\pmod{48}.
\]
\end{theorem}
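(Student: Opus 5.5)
We reduce to the three parameters $t=0,1,\infty$ of $\PP^1(\F_3)\setminus\{[1:-1]\}$. As noted after \eqref{eq:Et}, $E^\bk_{v,w}$ is $E^\bk_t$ with $t=w/v$, and $E^\bk_\infty=E^\bk_0$, so only $t=1$ and $t=0$ need treatment. Since $48=3\cdot16$, it suffices to show the traces differ modulo $3$ or modulo $16$.

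\emph{Case $t=1$.} Proposition~\ref{prop:t1-direct} gives, for each $\fq\mid3$, a triple among $(1,3,5),(2,4,8),(2,6,10),(1,5,7)$ with $a_\fq(E_1^\bk)\not\equiv a_\fq(E_{-1}^\bk)\pmod 3$. All four triples lie in $\cT_r$ by Lemma~\ref{lem:valid-triples}(i), and a difference modulo $3$ is a difference modulo $48$.

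\emph{Case $t=0$.} Fix $\fq\mid3$ and suppose, for contradiction, that every $\bk\in\cT_r$ has $a_\fq(E_0^\bk)\equiv a_\fq(E_{-1}^\bk)\pmod{48}$. In particular the congruence holds modulo $3$ for the triples $\mathbf f_j$ and $(2,6,10)$, which lie in $\cT_r$ by Lemma~\ref{lem:valid-triples}(i),(ii). Lemma~\ref{lem:direct-preparation} then gives that $\mu_r\subset k$, that $f$ is odd, and that every $1+\zeta_r^j$ is a nonzero square. These are exactly the hypotheses of Lemma~\ref{lem:quartet}, which produces $x_1<\dots<x_4$ whose three-element subsets are triples in $\cT_r$, together with distinct nonzero squares $a_i=\omega_{x_i}+1$ satisfying $p_1=\dots=p_4=-1$.

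For each three-element subset $\bk\subset\{x_1,\dots,x_4\}$, Lemma~\ref{lem:t0-models} identifies $E_0^\bk$ and $E_{-1}^\bk$ with $C^0_{A_\bk}$ and $C^{-1}_{A_\bk}$. Hence
\[
T(A_\bk)=a_\fq(E_{-1}^\bk)-a_\fq(E_0^\bk).
\]
Since $f$ is odd, $|k|\equiv3\pmod4$, so Lemma~\ref{lem:four-point-count} applies and gives
\[
\sum_{\bk}T(A_\bk)\equiv-2\sum_j p_j=8\pmod{16}.
\]
On the other hand, the assumption makes each $T(A_\bk)\equiv0\pmod{16}$, so the sum is $\equiv0\pmod{16}$, a contradiction. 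Hence some triple in $\cT_r$ separates the traces at $t=0$ and $t=-1$ modulo $48$, which also covers $t=\infty$.

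The main bookkeeping point is that the required triples from Lemmas~\ref{lem:valid-triples} and~\ref{lem:quartet} lie in $\cT_r$. They may not be increasingly ordered with entries at most $n$ (for example $\mathbf f_j$). This is harmless because Remark~\ref{rem:triple-symmetry} says reordering or negating indices gives a $K$-isomorphic curve, and we take $\cT_r$ to be all triples satisfying \eqref{eq:triple-conditions}. The trace data at each fixed $\fq$ comes from the reduction, so $\bk$ may depend on $\fq$, as the statement allows.
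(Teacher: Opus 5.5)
Your proposal is correct and follows the paper's proof essentially step for step. You reduce to $t=0,1$, use Proposition~\ref{prop:t1-direct} for $t=1$, and for $t=0$ chain Lemma~\ref{lem:direct-preparation}, Lemma~\ref{lem:quartet}, Lemma~\ref{lem:t0-models} and Lemma~\ref{lem:four-point-count} to reach the contradiction $0\equiv 8\pmod{16}$. Your appeal to Remark~\ref{rem:triple-symmetry} is unnecessary but harmless: \eqref{eq:triple-conditions} allows entries up to $r-1$, and Lemma~\ref{lem:valid-triples} already places $\mathbf f_j$ and the quartet subsets in $\cT_r$.
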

\begin{proof}
Fix $[v:w]\neq[1:-1]$ and a prime $\fq\mid3$. If $v=0$, then
$E_{v,w}^\bk=E_\infty^\bk=E_0^\bk$. If $v\neq0$, put $t=w/v$. Thus it is
enough to treat $t=0$ and $t=1$.

For $t=1$, the result follows from \Cref{prop:t1-direct}. Suppose now that
for $t=0$ every $\bk\in\cT_r$ gives congruent traces modulo~$48$ at~$\fq$.
In particular, the triples $\mathbf f_j$, $j\not\equiv0\pmod r$, and
$(2,6,10)$ give congruent traces modulo~$3$, so
\Cref{lem:direct-preparation} applies. Thus $\mu_r\subset k$, $f$ is odd,
and every $1+\zeta_r^j$, $j\not\equiv0\pmod r$, is a nonzero square.

By \Cref{lem:quartet}, choose $x_1,x_2,x_3,x_4$ and put
$a_i=\omega_{x_i}+1$. Every subset of $\{x_1,x_2,x_3,x_4\}$ with three elements satisfies
\eqref{eq:triple-conditions}, the four $a_i$ are distinct nonzero squares,
and $p_j=-1$ for all~$j$.

Apply \Cref{lem:four-point-count} to $a_1,a_2,a_3,a_4$. If
$A=\{a_{i_1},a_{i_2},a_{i_3}\}$, let
$\bk=(x_{i_1},x_{i_2},x_{i_3})$, reordered increasingly. Then
$A=A_\bk$, and \Cref{lem:t0-models} gives
\[
 T(A)=a_\fq(E_{-1}^\bk)-a_\fq(E_0^\bk).
\]
By assumption each of these four differences is divisible by~$48$, hence
by~$16$. On the other hand, \Cref{lem:four-point-count} gives
\[
 \sum_{|A|=3}T(A)\equiv-2\sum_{j=1}^4p_j=8\pmod{16},
\]
a contradiction.
\end{proof}

\section{Proof of \Cref{thm:main}}

Let $r \geq 11$ be a prime.
By \Cref{thm:trace-separation}, the finite set $\cT_r$ satisfies the
hypothesis of \Cref{thm:local-criterion}. Hence \Cref{thm:main} follows.

\end{document}